\documentclass[12pt,reqno,a4paper]{amsart}
\usepackage{blindtext}
\usepackage{fullpage}
\usepackage{mathtools}
\usepackage{longtable}
\usepackage{amsmath,amssymb,amsthm}
\usepackage{amscd}
\usepackage{bm}
\usepackage{hyperref}   
\usepackage{dsfont}
\usepackage{enumerate}
\usepackage{epsfig}
\usepackage{float, graphicx}
\usepackage{latexsym, amsxtra}
\usepackage{mathrsfs}
\usepackage{multicol}
\usepackage[normalem]{ulem}
\usepackage{psfrag}
\usepackage[parfill]{parskip}
\usepackage{stmaryrd}
\usepackage{tikz}
\usepackage[T1]{fontenc}
\usepackage{url}
\usepackage{verbatim}
\usepackage{indentfirst}
\usepackage{tikz-cd}
\usepackage{booktabs}

\usepackage{mathtools}

\makeatletter
\def\thm@space@setup{%
	\thm@preskip=2ex \thm@postskip=2ex
}
\makeatother

\hypersetup{hidelinks}

\newtheorem{thm}{Theorem~}[section]
\newtheorem{lem}[thm]{Lemma~}

\newtheorem{prop}[thm]{Proposition~}
\newtheorem{ques}[thm]{Question~}
\newtheorem{cor}[thm]{Corollary~}

\newtheorem{conj}[thm]{Conjecture~}

\theoremstyle{remark}

\theoremstyle{definition}  

\newcommand{\calA}{\mathcal{A}}
\newcommand{\calL}{\mathcal{L}}

\newcommand{\CC}{\mathbb{C}}
\newcommand{\ZZ}{\mathbb{Z}}
\newcommand{\RR}{\mathbb{R}}

\newcommand{\PP}{\mathbb{P}}
\newcommand{\FF}{\mathbb{F}}

\newcommand\id{\mathrm{id}}

\newcommand\mult{\mathrm{mult}}

\newcommand{\fp}{\mathfrak{p}}

\DeclareMathOperator{\Supp}{Supp}
\DeclareMathOperator{\PD}{PD}
\DeclareMathOperator{\Irr}{Irr}

\title{Monodromy Eigenvalues of Milnor Fibers for Line Arrangements}

\author[B. Xie]{Baiting Xie}
\address{Qiuzhen College, Tsinghua University, China}
\email{xbt23@mails.tsinghua.edu.cn}

\author[C. Yu]{Chenglong Yu}
\address{Center for Mathematics and Interdisciplinary Sciences, Fudan University and
Shanghai Institute for Mathematics and Interdisciplinary Sciences (SIMIS), Shanghai, China}
\email{yuchenglong@simis.cn}

\author[Z. Zheng]{Zhiwei Zheng}
\address{Tsinghua University, China}
\email{zhengzhiwei@mail.tsinghua.edu.cn}
\date{}

\begin{document}
		
\begin{abstract}
It is an important problem to know whether the monodromy on the cohomology of Milnor fibers associated to hyperplane arrangements is a combinatorial invariant. In this paper, we obtain a combinatorial  vanishing criterion for certain eigenspaces of this algebraic monodromy in line arrangements. Combining with Hirzebruch inequality, which is a consequence of Bogomolov--Miyaoka--Yau inequality, we prove that for essential complex line arrangements, the eigenvalues of the monodromy have orders at most five. This is a partial progress toward Papadima--Suciu conjecture and proves Salvetti--Serventi connectivity conjecture. For essential complexified real line arrangements, the monodromy order is improved to at most four thanks to Shnurnikov's inequality. This confirms Papadima--Suciu conjecture for real line arrangements and also Yoshinaga's sharp pair conjecture.

\end{abstract}
	
	\maketitle
     \setcounter{tocdepth}{1}

\section{Introduction}\label{section: introduction}

Let $\calA = \{l_{1},\cdots,l_{d}\}$ be a reduced projective line arrangement in $\CC\PP^2$ consisting of $d$ distinct lines. Denote by $\calL_2=\{p\mid p=l_i\cap l_j, i\neq j\}$ the set of all intersection points of $\calA$. In this paper we always assume $\calA$ is \textbf{essential}, that is, $\#\calL_2 > 1$. Fix a defining linear form $L_i \in (\CC^3)^*$ for each $l_i \in \calA$. Then the singularity
\begin{equation*}
    (\CC^3,\mathbf{0}) \longrightarrow (\CC,0),\quad x \mapsto \prod_{i=1}^{d}L_{i}(x)
\end{equation*}
induces an affine Milnor fibration over the punctured plane defined by 
\begin{equation*}
\{x\in \CC^3\mid \prod_{i=1}^{d}L_{i}(x)\neq 0\}\to \CC^\times,\quad
x\mapsto \prod_{i=1}^{d}L_{i}(x).
\end{equation*}
The Milnor fiber of $ \calA $ is then defined by $ F = \{\prod_{i=1}^{d}L_{i}(x) = 1 \} \subseteq \CC^{3}$ and the fibration gives a monodromy action on $F$
\[ h \colon F \rightarrow F,\quad x\mapsto \exp({2\pi \sqrt{-1}\over d}) x.\]
By quotienting the action of $h$, we obtain a $ d $ to $ 1 $ covering map $  f \colon F \rightarrow U $, where $U = \{\prod_{i=1}^{d}L_{i} \neq 0 \} \subseteq \CC\PP^{2} $ is exactly the complement of the arrangement $\calA$. It is straightforward to check that $F,h,f$ are independent of the choices of $L_i$.

In the theory of line arrangements, a central and widely open problem is whether the cohomology of $F$ with monodromy action $h^*$ is combinatorial invariant or not. More explicitly, the question can be stated as below.
\begin{ques}\label{ques: combinatorial determinacy}
    For fixed $\zeta \in \CC^\times$, does the dimension of the $\zeta$-eigenspace $H^1(F,\CC)_{\zeta}$ of $h^*$ depend only on the incidence relation of $\calL_2$ and $\calA$?
\end{ques}

The pioneering work of Orlik--Solomon \cite{orlik1980combinatorics} shows that the cohomology ring $H^*(U,\ZZ)$ is a combinatorial invariant, which gives an affirmative answer to Question \ref{ques: combinatorial determinacy} when $\zeta = 1$. A famous result by Papadima and Suciu \cite{papadima2017milnor} gives a complete solution to this problem when $\calL_2$ has only points of multiplicity $2$ and $3$.

Since $h^d = \id_{F}$, each eigenvalue of $h^*$ must be a primitive $m$-th root of unity $\zeta_m$ for some $m | d$. For all known examples with $H^1(F, \CC)_{\zeta_m}\neq 0$, the integer $m \leq 4$. Combining with mod-$p$ combinatorial invariants Aomoto–Betti numbers, Papadima and Suciu proposed the following conjecture in \cite{papadima2017milnor}.

\begin{conj}[{\cite[Conjecture~1.9]{papadima2017milnor}}]\label{conj: Papadima--Suciu}
    Let $ \calA $ be an essential line arrangement in $\CC\PP^2$ and $m$ be a prime power. When $m \geq 5$, $H^1(F, \CC)_{\zeta_m} = 0 $ for any primitive $m$-th root of unity $\zeta_m$.
\end{conj}

On the other hand, In \cite{salvetti2017twisted}, Salvetti and Serventi proposed the following conjecture from the perspective of characteristic varieties.

\begin{conj}\cite[Conjecture~1]{salvetti2017twisted}\label{conj: Salvetti--Serventi connectivity}
Let $\calA$ be a line arrangement in $\CC\PP^2$. Let  $\Gamma(\calA)$ be the graph defined by vertices
$l \in \calA $ and edges $ (l,l') $ if and only if
$l\cap l' $ is a double point. If $\Gamma(\calA)$ is connected after removing some vertex, then $ H^1(F,\CC)_{\zeta_m} = 0 $  for any primitive $m$-th root of unity $\zeta_m$ when $m > 1$.
\end{conj}

When $ \calA $ is a complexified real line arrangement, or equivalently, when all $L_i$ can be taken simultaneously as real linear forms in suitable coordinates, the eigenvalues of $h^*$ seem to be more strongly restricted. In this case, Yoshinaga \cite{MR3090727} discovered that, for all known examples with $ H^1(F,\CC)_{\zeta} \neq 0 $, one has $\zeta^3 = 1 $ and $ \calA $ contains a \textbf{sharp pair}. More explicitly, there exists a pair of lines $(l,l')$ in $ \calA $ such that $ \RR\PP^2 \setminus ( l \cup l') $ has a connected component containing no points in $ \calL_{2} $. Hence it is natural to ask the following question.

\begin{conj}\cite[Conjecture~1.2]{MR3666711}\label{conj: Yoshinaga's sharp pair}
Let $ \calA $ be an essential real complexified line arrangement in $ \CC\PP^{2} $. If $ \calA $ contains a sharp pair, then $ H^1(F,\CC)_{\zeta_m} = 0$ for any primitive $m$-th root of unity $\zeta_m$ when $m \neq 1,3$.
\end{conj}

Some special cases of Conjecture \ref{conj: Salvetti--Serventi connectivity} and Conjecture \ref{conj: Yoshinaga's sharp pair} have been verified. For Conjecture \ref{conj: Salvetti--Serventi connectivity}, Bailet \cite{bailet2014monodromy} proved the case where $\calA$ satifies some extra multiplicity conditions. When $\calA$ is a complexified real line arrangement, some cases with extra conditions on its real figure have been verified by Salvetti--Serventi \cite{salvetti2017twisted} and Elduque--Cueto--Maxim \cite{elduque2025TBA}. For Conjecture \ref{conj: Yoshinaga's sharp pair}, Bailet and Settepanella \cite{MR3666711} proved the case with some extra hypothese on the homology graph of $\calA$. The first author and the second author proved in \cite{xie2025homology} that this conjecture holds for even $m$. 

The main result of this paper is the following combinatorial criterion for the vanishing of $H^1(F, \CC)_{\zeta_m}$.

\begin{thm}\label{main thm: positive-semi-definite criterion}
	Let $\calA = \{l_1,\cdots,l_d\}$ be a line arrangement in $\CC\PP^2$. Let $ m > 1 $ be an integer dividing $d$. Define  
    
    \[
\Sigma_m(\calA)=\{p \in \calL_{2}\mid m \mid \mult(p),\ \mult(p) \geq 3\}.
\]
    If there exist real numbers $\lambda_1,\cdots,\lambda_d$ such that 
    \[
\sum\limits_{p \in \Sigma_{m}(\calA)}(\sum\limits_{p \in l_{i}}\lambda_{i})^{2}-(\sum\limits_{i=1}^{d}\lambda_{i})^{2} < 0,
\]
    then $ H^{1}(F,\CC)_{\zeta_m} = 0 $ for any primitive $ m $-th root of unity $\zeta_m$.
\end{thm}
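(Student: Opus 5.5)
We will realize $H^1(F,\CC)_{\zeta_m}$ geometrically on a resolved cyclic cover and then test a suitable divisor class against the Hodge index theorem. Recall the standard description: $H^1(F,\CC)_{\zeta}$ for $\zeta=\exp(2\pi\sqrt{-1}k/d)$ is identified with $H^1(U,\mathcal{L}_\zeta)$, where $\mathcal{L}_\zeta$ is the rank one local system with monodromy $\zeta$ around every line. Let $\pi\colon X\to\CC\PP^2$ be the blow-up of all points of multiplicity $\geq 3$. By the theory of cohomology of rank one local systems (Esnault--Schechtman--Viehweg, Libgober), $H^1(U,\mathcal{L}_\zeta)$ splits into Hodge pieces. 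Its $(1,0)$ part, and dually its $(0,1)$ part, is computed by $H^0$ or $H^1$ of line bundles of the form $K_X+\mathcal{L}^{(k)}$, where $\mathcal{L}^{(k)}$ is a $\QQ$-divisor rounding of $\frac{k}{d}\bigl(\pi^*(dH)-\sum_p\mult(p)E_p\bigr)$. It is a classical fact that only exceptional curves $E_p$ with $m\mid\mult(p)$ contribute nontrivially: at the other points, the local monodromy around $E_p$ is $\zeta^{\mult(p)}\neq1$, and the corresponding rounding is fractional. Since complex conjugation exchanges $\zeta$ and $\bar\zeta$, it suffices to show that $H^1(U,\mathcal{L}_\zeta)^{1,0}=0$ for every primitive $\zeta$ of order $m$.

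\textbf{Step 1 (geometric setup).} A nonzero class in $H^1(U,\mathcal{L}_\zeta)^{1,0}$ gives an irrational pencil. Equivalently, by Arapura's theorem on characteristic varieties, $\mathcal{L}_\zeta$ is a torsion point on a positive-dimensional component, and such a component comes from an orbifold fibration $\phi\colon U\to C$. We use the Arapura/Dimca description of these components. The component through $\mathcal{L}_\zeta$ is either local, or comes from a pencil $\phi\colon\CC\PP^2\dashrightarrow\PP^1$ whose base locus consists of multiple points.

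If the component is local, we must rule out the case $\mathcal{L}_\zeta$ pulled back from a single point of multiplicity $\ge3$. This case is easy: such a point $p$ would need to contain all lines, and then $\#\calL_2=1$, contradicting essentiality. Otherwise the pencil is induced on $X$ by a morphism $\phi\colon X\to\PP^1$, or to a curve $C$ of higher genus. The lines are unions of fiber components, and every base point of the pencil is a point $p$ in which every fiber has equal multiplicity. The eigenvalue condition with $\zeta$ of order $m$ forces these base points to lie in $\Sigma_m(\calA)$. This is the key combinatorial input: the exceptional divisors $E_p$ over base points are exactly the horizontal (multisection) curves, and the torsion translate being of order $m$ forces $m\mid\mult(p)$.

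\textbf{Step 2 (Hodge index).} Let $\Phi$ denote the fiber class of $\phi$ on $X$. Then $\Phi^2=0$, and $\Phi\cdot\pi^*H=\deg$ of the pencil curves is $e>0$. Write $\Phi=e\,\pi^*H-\sum_{p\in B}c_pE_p$, where $B\subseteq\Sigma_m(\calA)$ and $c_p>0$. Then $0=\Phi^2$ gives $e^2=\sum_{p\in B}c_p^2$. Now take the real divisor
\[
D=\sum_i\lambda_i\,\widetilde{l_i}=\Bigl(\sum_i\lambda_i\Bigr)\pi^*H-\sum_{p}\Bigl(\sum_{p\in l_i}\lambda_i\Bigr)E_p .
\]
Each $\widetilde{l_i}$ is contained in fibers of $\phi$, since a line through a base point must avoid being a multisection. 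So $D$ is supported on fibers, and Zariski's lemma gives $D^2\le0$ unless $D$ is a multiple of $\Phi$, in which case $D^2=0$. Compute
\[
D^2=\Bigl(\sum\lambda_i\Bigr)^2-\sum_{p\text{ blown up}}\Bigl(\sum_{p\in l_i}\lambda_i\Bigr)^2 .
\]
This is not quite the expression in the theorem, because the sum runs over all blown-up points rather than only $\Sigma_m$. The fix is to blow up only the base points $B\subseteq\Sigma_m(\calA)$, working on $X_B$ where $\phi$ is already a morphism, since base points of the pencil are exactly where we need to blow up. Then
\[
D^2=\Bigl(\sum\lambda_i\Bigr)^2-\sum_{p\in B}\Bigl(\sum_{p\in l_i}\lambda_i\Bigr)^2\ \ge\ \Bigl(\sum\lambda_i\Bigr)^2-\sum_{p\in\Sigma_m}\Bigl(\sum_{p\in l_i}\lambda_i\Bigr)^2>0 ,
\]
by hypothesis. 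This contradicts Zariski's lemma, so no such pencil exists and the eigenspace vanishes.

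\textbf{Main obstacle.} The delicate step is Step 1. We must justify that a nonzero $\zeta_m$-eigenspace really yields a pencil (possibly with multiple fibers, i.e.\ an orbifold fibration) whose base points all have multiplicity divisible by $m$ and at least $3$, and whose fibers contain every line. This includes handling the translated-component case of the characteristic variety, where the fibration has multiple fibers, and checking that $\zeta$ of order $m$ forces each base-point multiplicity to be divisible by $m$. I would first try to argue via the line-bundle description: a nonzero section of $K_X+\mathcal{L}^{(k)}$, together with the relation $\mathcal{L}^{(k)}\equiv$ a combination of $\pi^*H$ and $E_p$ with $p\in\Sigma_m$, should be used directly. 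Alternatively, I would apply Hodge index to the Chern class of $\mathcal{L}$ restricted to the Albanese-type map, bypassing the classification.
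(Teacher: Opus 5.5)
There is a genuine gap, and it is the step you flag yourself. Step 1 is not a technicality; it is the whole difficulty, and the reduction is not available.

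A single nonzero class in $H^1(U,\mathcal L_\zeta)^{1,0}$ does not produce a pencil: Castelnuovo--de Franchis needs two independent forms with vanishing wedge product. Arapura's theorem also does not place $\mathcal L_\zeta$ on a positive-dimensional component of $V_1(U)$. Isolated torsion points are allowed, no fibration is attached to them, and nothing you cite rules this out for the diagonal character.

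Even granting a fibration, three facts are asserted rather than proved:
\begin{enumerate}
\item every line lies in a fiber;
\item one blow-up at each base point already makes $\phi$ a morphism, which you need for $\Phi=e\,\pi^*H-\sum c_pE_p$ with $e^2=\sum c_p^2$;
\item all base points lie in $\Sigma_m(\calA)$.
\end{enumerate}
These are exactly what a classification of such pencils, including translated components with multiple fibers, would have to deliver.

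You also silently assume that $H^1(U,\mathcal L_\zeta)$ has no weight-two part. This is true for $\zeta\neq 1$, but it needs an argument: every boundary component of an equivariant compactification is $h$-stable, so the residue map kills the $\zeta$-eigenspace.

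The missing idea is that no fibration is needed. Your Step 2 works once the fiber class is replaced by a class that always exists.
\begin{enumerate}
\item Take an $h$-equivariant smooth compactification $S$ of $F$, with $f\colon S\to Y$ extending the covering, where $Y$ is the blow-up of all points of multiplicity $\ge 3$. Then $H^1(S,\CC)_\zeta\to H^1(F,\CC)_\zeta$ is onto for $\zeta\ne 1$, so it suffices to kill holomorphic $\zeta_m$-eigenforms $\alpha$ on $S$.
\item The real $(1,1)$-class $[\sqrt{-1}\alpha\wedge\overline{\alpha}]$ has square zero, just like $\Phi$.
\item Instead of blowing up only base points, keep all blow-ups and absorb the unwanted exceptional curves into the divisor. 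The divisor $D=\sum_i\lambda_i\widetilde{l_i}+\sum_{p\notin\Sigma_m(\calA)}(\sum_{p\in l_i}\lambda_i)E_p$ satisfies $D^2=(\sum_i\lambda_i)^2-\sum_{p\in\Sigma_m(\calA)}(\sum_{p\in l_i}\lambda_i)^2>0$.
\item Each component of $f^*D$ is either a $\CC\PP^1$ or a curve $C_p$ with $h^{\mult(p)}=\id$ and $m\nmid\mult(p)$. So no component carries a $\zeta_m$-eigen $1$-form, $\alpha$ vanishes on $\Supp f^*D$, and $[\sqrt{-1}\alpha\wedge\overline{\alpha}]\cdot f^*D=0$.
\item Since $(f^*D)^2=d\,D^2>0$, the Hodge index theorem forces $[\sqrt{-1}\alpha\wedge\overline{\alpha}]=0$. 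Pairing with a K\"ahler class then gives $\alpha=0$, and complex conjugation handles the $(0,1)$ part.
\end{enumerate}
When $\alpha$ does come from a pencil, the class of $\sqrt{-1}\alpha\wedge\overline{\alpha}$ is a positive multiple of the fiber class. So this is your argument with the classification step removed.
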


Specializing to $\lambda_i=1$ and combining with Hirzebruch's inequality for line arrangements, which is a consequence of Bogomolov--Miyaoka--Yau \cite{miyaoka1977chern} \cite{yau1977calabi} inequality, we have the following bounds on $m$.
\begin{thm}[{=Theorem~\ref{thm: upper bound on permissible eigenvalues}}]
\label{main thm: upper bound on permissible eigenvalues}
	Let $ \calA $ be an essential line arrangement in $\CC\PP^2 $. Then $ H^{1}(F,\CC)_{\zeta_m} = 0 $ for any primitive $ m $-th root of unity $\zeta_m$ when $ m \geq 6 $. 
\end{thm}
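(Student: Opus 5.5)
We specialize Theorem~\ref{main thm: positive-semi-definite criterion} to $\lambda_1=\cdots=\lambda_d=1$. The sum over $p\in l_i$ is then $\mult(p)$, and the criterion says that $H^1(F,\CC)_{\zeta_m}=0$ as soon as
\[
\sum_{p\in\Sigma_m(\calA)}\mult(p)^2<d^2 .
\]
We may assume $m\mid d$, since otherwise $\zeta_m$ is not an eigenvalue of $h^*$. So it suffices to prove this strict inequality whenever $m\ge 6$ and $\calA$ is essential. The plan is to bound the left side using two ingredients: the elementary pair-counting identity and Hirzebruch's inequality.

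Write $t_k$ for the number of points of $\calL_2$ of multiplicity $k$. The pair-counting identity is $\sum_k \binom{k}{2}t_k=\binom d2$. Hirzebruch's inequality, valid when $t_d=t_{d-1}=0$, is
\[
t_2+t_3\ge d+\sum_{k\ge5}(k-4)t_k .
\]
Every point of $\Sigma_m(\calA)$ has multiplicity a multiple of $m$, hence $k\ge 6$. The crude estimate $k^2\le \tfrac{12}{5}\binom k2$ for $k\ge6$ gives only $\sum k^2 t_k\lesssim \tfrac{6}{5}d^2$, which is not enough, so Hirzebruch must be used.

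To use it, set $S=\sum_{k\ge6}k^2t_k$ over points of $\Sigma_m$. Pair counting gives
\[
\sum_{k\ge6}\tfrac{k(k-1)}{2}t_k\le \binom d2-t_2-3t_3 .
\]
By Hirzebruch, $t_2+3t_3\ge t_2+t_3\ge d+\sum_{k\ge6}(k-4)t_k$. Therefore
\[
\sum_{k\ge6}\Bigl(\tfrac{k(k-1)}{2}+k-4\Bigr)t_k\le \tfrac{d(d-1)}{2}-d .
\]
For $k\ge6$ we have $\tfrac{k^2+k-8}{2}\ge \tfrac{k^2}{2}$, since $k\ge 8/1$ fails only for $k<8$. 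So we handle it as follows: $k^2+k-8\ge k^2$ exactly when $k\ge8$. The cases $k=6,7$ need a slightly sharper bookkeeping. In those cases we can add back the $3t_3$ and $t_2$ slack, or we can instead use $k^2+k-8\ge \tfrac{34}{36}k^2$ and absorb the loss into the $-3d$ term. Either way we obtain $S\le d^2-3d+(\text{small})<d^2$. The arithmetic for $k=6,7$ is where care is needed. In practice we split off $\tfrac{1}{2}(k-8)t_k$ and control $\sum_{k=6,7}t_k$ by pair counting again, since $15t_6\le\binom d2$.

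The remaining cases are those where Hirzebruch does not apply, namely $t_d\ne0$ or $t_{d-1}\ne0$.
\begin{itemize}
\item $t_d\neq0$ is a pencil, which is excluded by essentiality.
\item If $t_{d-1}\ne0$, the arrangement is a near-pencil: one point of multiplicity $d-1$ and $d-1$ double points. Then $\Sigma_m$ contains at most the point of multiplicity $d-1$, and only if $m\mid d-1$. Since $m\mid d$ also, this would force $m=1$, a contradiction. So $\Sigma_m=\emptyset$, and the sum is $0<d^2$.
\end{itemize}

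The main obstacle is making the Hirzebruch-based estimate strict and uniform for multiplicities $6$ and $7$. If the direct inequality is tight there, we use that $m\ge6$ and $m\mid k$ forces $k\in\{6,7\}$ only when $m\in\{6,7\}$. Then all points in $\Sigma_m$ have the same multiplicity $m$, and a one-variable comparison of $m^2 t_m$ against $\tfrac{m(m-1)}2t_m+(m-4)t_m\le \tfrac{d^2-3d}{2}$ suffices.
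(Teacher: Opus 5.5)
\textbf{There is a genuine gap at $m\in\{6,7\}$.} Your reduction to $\sum_{p\in\Sigma_m}\mult(p)^2<d^2$ is fine, and so is your treatment of the near-pencil. For $m\ge 8$ your estimate $\sum_{k\ge6}(k^2+k-8)t_k\le d^2-3d$ also finishes the proof. For $m\in\{6,7\}$, however, the gap cannot be closed using only pair counting and the original Hirzebruch inequality $t_2+t_3\ge d+\sum_{k\ge5}(k-4)t_k$. Take the numbers $t_2=d+2t_6$ and $t_6=(d^2-3d)/34$, with all other $t_k=0$. They satisfy pair counting and that inequality, both with equality. Yet $36t_6=\frac{18}{17}(d^2-3d)$, which is $\ge d^2$ as soon as $d\ge 54$. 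Likewise for $m=7$, $t_2=d+3t_7$ and $t_7=(d^2-3d)/48$ give $49t_7=\frac{49}{48}(d^2-3d)\ge d^2$ for $d\ge147$.

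Your proposed repairs all fail at this extremal point:
\begin{enumerate}
\item There is no slack in $t_2$ or $t_3$ to add back, since $t_3=0$ and the Hirzebruch inequality is tight.
\item The deficit $2t_6$ is not $O(d)$. Pair counting only gives $t_6\le d(d-1)/30$, so it cannot be absorbed into $-3d$.
\item Your final one-variable comparison $17t_6\le\frac{d^2-3d}{2}$ is exactly the insufficient bound $36t_6\le\frac{18}{17}(d^2-3d)$.
\end{enumerate}
The coefficient $k-4$ is simply too weak at $k=6,7$.

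The missing ingredient is Hirzebruch's strengthened inequality \cite[inequality~(9)]{hirzebruch1986singularities}:
\[
t_2+\tfrac34t_3\ge d+\sum_{r\ge5}(2r-9)t_r,
\]
valid for $d\ge6$ when $t_d=t_{d-1}=t_{d-2}=0$. Combined with pair counting it gives
\[
\binom d2\ge d+\sum_{r\ge6}\frac{r^2+3r-18}{2}\,t_r .
\]
Since $r^2+3r-18\ge r^2$ holds precisely for $r\ge6$, this yields $\sum_{r\ge6}r^2t_r\le d^2-3d<d^2$ uniformly, with no special handling of $6$ and $7$.

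This version needs $t_{d-2}=0$, so you must also treat $t_{d-2}>0$ separately. In that case every intersection point other than the $(d-2)$-fold one lies on at most one line through that point plus the two remaining lines, so it has multiplicity at most $3$. Also $m\nmid d-2$, because $m\mid d$ and $m\ge6$. Hence the sum is at most $(d-2)^2<d^2$; in fact $\Sigma_m$ is empty. This is the route the paper takes.
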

This theorem partially confirm Conjecture \ref{conj: Papadima--Suciu}. Furthermore, Theorem \ref{main thm: upper bound on permissible eigenvalues} also proves the Salvetti--Serventi connectivity conjecture. 
\begin{cor}[=Corollary \ref{corollary: connected 2}]
\label{corollary: connected}
    Conjecture \ref{conj: Salvetti--Serventi connectivity} holds.
\end{cor}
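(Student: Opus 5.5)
The plan is to combine Theorem~\ref{main thm: upper bound on permissible eigenvalues}, which disposes of all $m\ge 6$, with a separate argument for the finitely many remaining orders $m\in\{2,3,4,5\}$. For those small orders I would use the connectivity hypothesis on $\Gamma(\calA)$. Fix a line $l_0$ such that $\Gamma(\calA)\setminus\{l_0\}$ is connected. Suppose, for contradiction, that $H^1(F,\CC)_{\zeta_m}\neq 0$ with $2\le m\le 5$ and $m\mid d$. The aim is to derive enough combinatorial structure to contradict either the hypothesis or Theorem~\ref{main thm: positive-semi-definite criterion}.

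\textbf{Step 1 (elementary specialization of the criterion).} Taking $\lambda=e_i$, the $i$-th standard basis vector, in Theorem~\ref{main thm: positive-semi-definite criterion} gives the quantity $\#(\Sigma_m(\calA)\cap l_i)-1$. Hence nonvanishing forces every line $l_i$ to contain at least one point of $\Sigma_m(\calA)$. More generally, for any subset $S$ of lines, taking $\lambda=\mathbf 1_S$ gives the necessary condition
\[
\sum_{p\in\Sigma_m}\#\{i\in S: p\in l_i\}^2\ \ge\ |S|^2 .
\]
I would record these inequalities as the combinatorial constraints that nonvanishing imposes.

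\textbf{Step 2 (use of double points).} The natural tool here is the mod-$p$ resonance description of nonvanishing, in the style of Papadima--Suciu or Bailet. I would take a nonzero eigenclass and pass to the associated cocycle $\sigma$ on the lines. At a double point $l_i\cap l_j$, whose multiplicity $2$ is not divisible by $m$ when $m\ge 3$, the local condition forces a fixed relation between $\sigma_i$ and $\sigma_j$. Connectivity of $\Gamma(\calA)\setminus\{l_0\}$ then propagates this relation, so $\sigma$ is determined on all lines other than $l_0$ by a single value. The global condition $\sum\sigma_i=0$, together with the local condition at any point of $l_0$ of multiplicity not divisible by $m$, should then force $\sigma=0$. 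The expected outcome is that nonvanishing requires every intersection point on $l_0$ to have multiplicity divisible by $m$. Combining this with Step 1 and the restriction $m\le 5$ should let me choose a test vector for the criterion, for example $\lambda_0=t$ and $\lambda_i=1$ for $i\neq 0$ with $t$ optimized, that violates the inequality in Theorem~\ref{main thm: positive-semi-definite criterion}.

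\textbf{Step 3 (the case $m=2$).} Double points themselves lie in the multiplicity-$2$ divisibility class, so the propagation in Step 2 behaves differently and this case needs separate treatment. I would handle it through the known vanishing for even $m$ from \cite{xie2025homology}, or by a direct parity version of the propagation argument.

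The main obstacle is Step 2. Making the cocycle propagation precise, and turning its conclusion into an explicit $\lambda$ for which the quadratic form in Theorem~\ref{main thm: positive-semi-definite criterion} is negative, is where the combinatorics could fail. I would first try the two-parameter family $\lambda=(t,1,\dots,1)$ and use the Hirzebruch-type count only if that family is insufficient.
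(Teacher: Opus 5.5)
Your decomposition is the paper's: Theorem~\ref{main thm: upper bound on permissible eigenvalues} handles $m\ge 6$, and a mod-$\fp$ resonance argument driven by double points handles the prime powers $m\in\{2,3,4,5\}$. But Step~2 lacks its actual input, and so its endgame is both unnecessary and unsupported.

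\textbf{The missing input.} There is no way to ``pass from a nonzero eigenclass to a cocycle on the lines''. What you need is the Papadima--Suciu bound $\dim H^1(F,\CC)_{\zeta_{\fp^s}}\le\beta_\fp(\calA)$ \cite{papadima2010spectral}, together with $\beta_\fp(\calA)=\dim_{\FF_\fp}Z_\fp(\calA)-1$. Here $Z_\fp(\calA)\subset\FF_\fp^d$ is cut out by $\sum_{p\in l_i}\tau_i=0$ at points with $\fp\mid\mult(p)$ and $\mult(p)\ge 3$, and by $\tau_i=\tau_j$ at every other point. In particular every double point imposes $\tau_i=\tau_j$, for every $\fp$. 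Two features of your setup are off. First, the divisibility that matters is by the prime $\fp$, not by $m$. Second, there is no global condition $\sum\tau_i=0$; one instead discards the line spanned by $(1,\dots,1)$, which would satisfy such a condition anyway since $\fp\mid d$.

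\textbf{The propagation closes on its own.} Connectivity of $\Gamma(\calA)\setminus\{l_0\}$ gives $\tau_i=\tau$ for all $l_i\ne l_0$. At any point $p\in l_0$, either $\tau_0=\tau$ directly, or $(\mult(p)-1)\tau+\tau_0=0$, which again gives $\tau_0=\tau$ in $\FF_\fp$ because $\fp\mid\mult(p)$. Hence $\beta_\fp(\calA)=0$; this is Lemma~\ref{lem: connected graph kills aomoto}. So the case you expected to survive, where all points of $l_0$ lie in the divisible class, is already excluded.

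\textbf{Your test-vector plan would not work.} Suppose every point of $l_0$ lies in $\Sigma_m$, with multiplicities $r_p$. Since $\sum_{p\in l_0}(r_p-1)=d-1$, the linear term in $t$ cancels. The criterion value for $\lambda=(t,1,\dots,1)$ is then $(k-1)t^2$ plus its value at $t=0$, where $k\ge 2$ is the number of points on $l_0$. The whole family therefore collapses to $t=0$, and you would need $\sum_{p\in l_0}(r_p-1)^2+\sum_{p\in\Sigma_m\setminus l_0}r_p^2<(d-1)^2$. You give no argument for this. Pair counting plus Hirzebruch is too weak for multiplicities $3,4,5$, which is exactly why Theorem~\ref{main thm: upper bound on permissible eigenvalues} only covers $m\ge 6$.

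Step~3 also fails as proposed. The even-$m$ vanishing of \cite{xie2025homology} is proved only for complexified real arrangements containing a sharp pair. It says nothing about an arbitrary complex arrangement satisfying the Salvetti--Serventi hypothesis. No separate treatment is needed in any case: over $\FF_2$ a double point still imposes $\tau_i=\tau_j$ (equivalently $\tau_i+\tau_j=0$), so the same propagation handles $\fp=2$, i.e.\ both $m=2$ and $m=4$. Your ``divisible by $m$'' bookkeeping would in fact assign wrong local conditions for $m=4$: a point of multiplicity $6$, say, imposes a sum condition, not equality. Step~1 is correct but plays no role in the proof.
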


For complexified real line arrangements, using Shnurnikov's Hirzebruch type inequality \cite{shnurnikov2016tk}, we eliminate $m=5$ in Theorem \ref{main thm: upper bound on permissible eigenvalues}.
\begin{thm}[{=Theorem~\ref{thm: upper bound on permissible eigenvalues when A is real}}]
\label{main thm: upper bound on permissible eigenvalues when A is real}
	Let $ \calA $ be an essential complexified real line arrangement in $\CC\PP^2$. Then $ H^{1}(F,\CC)_{\zeta_m} = 0 $ for any primitive $ m $-th root of unity $\zeta_m$ when $ m \geq 5 $.
\end{thm}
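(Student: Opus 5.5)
The plan is to apply Theorem~\ref{main thm: positive-semi-definite criterion} with $\lambda_1=\cdots=\lambda_d=1$. Since Theorem~\ref{main thm: upper bound on permissible eigenvalues} already covers $m\ge 6$, only $m=5$ really needs proof, but the argument below works uniformly for all $m\ge5$. With constant weights, the quantity in the criterion becomes $\sum_{p\in\Sigma_m(\calA)}\mult(p)^2-d^2$, so it suffices to show
\[
\sum_{p\in\Sigma_m(\calA)}\mult(p)^2<d^2 .
\]
If $\Sigma_m(\calA)=\emptyset$ this is immediate. Otherwise I will argue by contradiction, combining the incidence count with Shnurnikov's inequality. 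Write $m_p=\mult(p)$ and let $t_r$ be the number of points of $\calL_2$ of multiplicity $r$.

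\emph{Step 1: the incidence identity.} Every pair of distinct lines meets in exactly one point of $\calL_2$, so
\[
\sum_{p\in\calL_2}m_p(m_p-1)=d(d-1).
\]
Suppose, to the contrary, that $\sum_{p\in\Sigma_m}m_p^2\ge d^2$. Write $m_p^2=m_p(m_p-1)+m_p$ and apply the identity. This gives
\[
\sum_{p\notin\Sigma_m}m_p(m_p-1)\le \sum_{p\in\Sigma_m}m_p-d .
\]
Every double and triple point lies outside $\Sigma_m$, because points of $\Sigma_m$ have multiplicity at least $m\ge5$. Hence
\[
2t_2+6t_3\le \sum_{p\in\Sigma_m}m_p-d .
\]

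\emph{Step 2: Shnurnikov's inequality.} For a complexified real arrangement that is not a near-pencil, Shnurnikov's inequality \cite{shnurnikov2016tk} gives
\[
t_2+\tfrac32 t_3\ \ge\ 8+\sum_{r\ge4}\bigl(2r-\tfrac{15}{2}\bigr)t_r .
\]
All terms with $r\ge4$ are nonnegative. For $r\ge5$ one has $2r-\tfrac{15}{2}\ge \tfrac r2$. Dropping the points outside $\Sigma_m$ therefore yields
\[
t_2+\tfrac32t_3\ge 8+\tfrac12\sum_{p\in\Sigma_m}m_p ,
\]
that is, $4t_2+6t_3\ge 32+2\sum_{\Sigma_m}m_p$. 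On the other hand, Step 1 gives
\[
4t_2+6t_3\le 2(2t_2+6t_3)\le 2\sum_{\Sigma_m}m_p-2d .
\]
Comparing the two bounds gives $32\le -2d$, a contradiction. So the criterion applies and $H^1(F,\CC)_{\zeta_m}=0$.

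\emph{Step 3: the main obstacle.} The delicate point is checking the exact hypotheses and constants in Shnurnikov's inequality. I will confirm the precise form quoted in the paper. If its constants differ, the same linear comparison between $2t_2+6t_3$ (from the incidence identity) and the lower bound on $t_2+\tfrac32t_3$ should still close, since there is slack in both the $32$ and the $2d$. I also need to treat separately any degenerate case the inequality excludes, presumably the near-pencil, where one point has multiplicity $d-1$. An essential near-pencil has all other intersection points double. Then $\Sigma_m$ contains at most the single point of multiplicity $d-1$, and $(d-1)^2<d^2$, so the criterion holds directly there as well.
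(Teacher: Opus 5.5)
\textbf{Verdict: the strategy is the paper's, but Step~2 applies Shnurnikov's inequality under the wrong hypothesis, which leaves the case $t_{d-2}>0$ uncovered.}

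Your overall strategy matches the paper's. You take constant weights $\lambda_i=1$ in Theorem~\ref{main thm: positive-semi-definite criterion}, combine pair counting with Shnurnikov's inequality, and treat degenerate arrangements by hand. Your linear manipulation is correct; it is simply arranged as a contradiction restricted to $\Sigma_m$. The paper instead substitutes Shnurnikov into $\binom d2=t_2+3t_3+\sum_{r\ge4}\binom r2t_r$ to get $\frac12d^2>\binom d2\ge\frac12\sum_{r\ge5}r^2t_r$ directly.

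The problem is the hypothesis. Shnurnikov's inequality does \emph{not} hold for every arrangement that is not a near-pencil. The paper applies \cite[Theorem~1(b)]{shnurnikov2016tk} only when $t_d=t_{d-1}=t_{d-2}=0$, and the condition $t_{d-2}=0$ cannot be dropped. For example, take $d-2\ge4$ concurrent real lines and two further real lines meeting at a point of one of them. Then $t_{d-2}=1$, $t_3=1$, $t_2=2(d-3)$. The left side is $t_2+\frac32t_3=2d-\frac92$, which is smaller than the right side $8+\bigl(2(d-2)-\frac{15}{2}\bigr)=2d-\frac72$. Concretely, for $d=7$ one gets $t_5=1$, $t_3=1$, $t_2=8$, and $\frac{19}{2}<\frac{21}{2}$. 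Your argument never uses $m\mid d$, so whenever $m\mid d-2$ the branch $\Sigma_m\neq\emptyset$ contains such arrangements, and Step~2 is unjustified there. Your guess in Step~3 that the only excluded case is the near-pencil is exactly where this goes wrong.

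Either of two short repairs closes the gap.
\begin{enumerate}
\item Follow the paper and treat $t_{d-2}>0$ like the near-pencil. Any point other than the $(d-2)$-fold point lies on at most one of the concurrent lines plus the two extra lines, so it has multiplicity at most $3$. Hence $\sum_{p\in\Sigma_m}\mult(p)^2\le(d-2)^2<d^2$.
\item Use the hypothesis $m\mid d$ of Theorem~\ref{main thm: positive-semi-definite criterion}. Imposing it is harmless, since $h^d=\id$ means $\zeta_m$ is not an eigenvalue otherwise. Then $m\ge5$ divides neither $d-1$ nor $d-2$. So whenever $t_{d-1}>0$ or $t_{d-2}>0$ we have $\Sigma_m=\emptyset$, and your opening sentence already covers these cases.
\end{enumerate}
With either repair your proof is complete.
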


As an application of Theorem \ref{main thm: upper bound on permissible eigenvalues when A is real}, we prove Conjecure \ref{conj: Yoshinaga's sharp pair}

\begin{cor}[=Corollay~\ref{Corollary: Yoshinaga's conjecture}]
\label{corollary: Yoshinaga's conjecture intro}
  Conjecture \ref{conj: Yoshinaga's sharp pair} holds.
\end{cor}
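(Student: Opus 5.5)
The plan is to split the possible orders $m$ of the eigenvalue into three ranges, handle each by a result already available, and observe that together they exclude every $m\neq 1,3$. Fix an essential complexified real line arrangement $\calA$ with $d$ lines that contains a sharp pair. Let $\zeta_m$ be a primitive $m$-th root of unity with $m\neq 1,3$. Since $h^d=\id_F$, any eigenvalue of $h^*$ on $H^1(F,\CC)$ has order dividing $d$. So if $m\nmid d$ there is nothing to prove, and we may assume $m\mid d$. The remaining values split as $m\geq 5$, $m=4$, and $m=2$.

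\textbf{Case $m\geq 5$.} This is exactly Theorem~\ref{main thm: upper bound on permissible eigenvalues when A is real}. It applies because $\calA$ is an essential complexified real line arrangement; the sharp pair hypothesis is not even needed here. It gives $H^1(F,\CC)_{\zeta_m}=0$.

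\textbf{Case $m\in\{2,4\}$.} These are the even values of $m$, and for them I would invoke the result of \cite{xie2025homology} quoted in the introduction: Conjecture~\ref{conj: Yoshinaga's sharp pair} holds for even $m$. That argument genuinely uses the sharp pair. The main obstacle is to check that its hypotheses coincide with ours (essential, complexified real, containing a sharp pair), so that the citation applies verbatim. If one preferred a self-contained route for these two values, I would first try Theorem~\ref{main thm: positive-semi-definite criterion}. One would choose weights $\lambda_i$ concentrated on the two lines $l,l'$ of the sharp pair and on the lines bounding the empty region, so as to make $\sum_{p\in\Sigma_m(\calA)}(\sum_{p\in l_i}\lambda_i)^2$ smaller than $(\sum_i\lambda_i)^2$. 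However, I expect this to be delicate for $m=2$, where $\Sigma_2(\calA)$ can be large, so I would rely on the cited result.

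Combining the three cases, every $m\notin\{1,3\}$ gives $H^1(F,\CC)_{\zeta_m}=0$, which is Conjecture~\ref{conj: Yoshinaga's sharp pair}.
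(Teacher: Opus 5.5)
Your proposal is correct and is essentially the paper's proof: the paper also settles $m\geq 5$ by the real-arrangement bound (Theorem~\ref{main thm: upper bound on permissible eigenvalues when A is real}) and the even values $m\in\{2,4\}$ by \cite[Theorem 1.6]{xie2025homology}. Together these cover every $m\neq 1,3$. Your speculative self-contained route via Theorem~\ref{main thm: positive-semi-definite criterion} is not needed.
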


\textbf{Acknowledgment.}
The authors thank Laren\c{t}iu Maxim for his interest and for pointing out several errors in the initial version of this paper. We also benefit a lot from Elduque--Herradón-Cueto--Maxim's earlier work on the Salvetti--Serventi connectivity conjecture and their wonderful lecture in the Summer School on Hyperplane Arrangements and Related Topics
at Tongji University in 2025. The authors thank the organizers, especially Yongqiang Liu, for his continued interest and encouragement. The authors also thank Shing-Tung Yau for his support of Qiuzhen college and drawing our attention to Hirzebruch's work on several occasions. The second author is supported by the national key research and development program of China (No. 2022YFA1007100) and NSFC 12201337. The third author is partially supported by NSFC 12301058.

\section{Equivariant smooth compactifications of Milnor fibers}\label{section: compactification of Milnor fiber}

Keeping the notations from \S\ref{section: introduction}, let $ \pi \colon Y \rightarrow \CC\PP^{2} $ be the blowup at all points $ p \in \calL_{2} $ with $\mult(p) \geq 3$. Denote by $\mathcal{B}=\pi^{-1}(\calA)$ the preimage of $\calA$. Then $Y$ is a smooth compactification of $U$ and $\mathcal{B}=Y \setminus U$ is a simple normal crossing divisor on $Y$. For each line $ l_i \in \calA $, denote by $ \widetilde{l_i} $ the strict transform of $l_i$ under $\pi$. For each $p \in \calL_2$ with $\mult(p) \geq 3$, denote by $E_p$ the exceptional divisor at $p$ and define $E_{p}^\circ = E_p \setminus (\bigcup_{p \in l_i}\widetilde{l_i})$. The goal of this section is the following theorem.

\begin{prop}\label{thm: smooth compactification of the Milnor fiber of reduced line arrangements}
There exists a smooth compactification $ S $ of $ F $ together with extended maps fitting into the commutative diagram 
   \begin{equation*}
    \begin{tikzcd}
    F \arrow[r, hook] \arrow[d, "h"] & S \arrow[d, "h"] \\
    F \arrow[r, hook] \arrow[d, "f"] & S \arrow[d, "f"] \\
    U \arrow[r, hook] & Y
\end{tikzcd}
   \end{equation*}
    satisfying the following conditions: 
	\begin{enumerate}
		\item $ Z = S \setminus F $ is a simple normal crossing divisor with smooth irreducible components. 
        \item $ f^{-1}(\mathcal{B}) = Z $. Furthermore, for each $ p \in \calL_2 $ with $\mult(p) \geq 3 $, there exists exactly one irreducible component $ C_p $ of $Z$ containing $f^{-1}(E_p^\circ)$. All other irreducible components of $Z$ are isomorphic to $\CC\PP^1$.
		\item For any irreducible component $ C $ of $ Z $, $ h(C) = C $ . Furthermore, if $ C = C_{p} $ for some $ p \in \calL_2 $ with $\mult(p) \geq 3 $, then $ (h|_{C})^{\mult(p)} = \id_{C} $.  
	\end{enumerate}
\end{prop}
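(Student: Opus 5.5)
The plan is to realize $S$ as a resolution of the normalization of $Y$ in the function field of $F$, that is, the normalized cyclic cover branched along $\mathcal{B}$. Since $F\to U$ is the $\ZZ/d$-cover determined by $\prod L_i/L_1^d$, it is the cover attached to the character sending each meridian of $l_i$ to $1\in\ZZ/d$. I will take $\bar S$ to be the normalization of $Y$ in $\CC(F)$. Then $h$, acting as a deck transformation, extends to $\bar S$, and $f$ extends to a finite map $\bar f\colon \bar S\to Y$.

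First I will analyze $\bar S$ locally over $Y$.
\begin{itemize}
\item Over a smooth point of $\mathcal{B}$ the cover is locally $z^d=x^{a}$ times a unit, where $a$ is the vanishing order of $\pi^*\prod L_i$ along the component: $a=1$ for $\widetilde{l_i}$ and $a=\mult(p)$ for $E_p$. The normalization is smooth there.
\item Over a normal crossing point of $\mathcal{B}$ the local model is $z^d=x^ay^b$. Its normalization is a disjoint union of cyclic quotient (toric) singularities $\frac{1}{n}(1,q)$.
\end{itemize}
So $\bar S$ has only isolated cyclic quotient singularities, lying over the nodes of $\mathcal{B}$. These are $\widetilde{l_i}\cap E_p$ with local data $(1,\mult(p))$, and the ordinary double points of $\calA$ with data $(1,1)$.
\begin{itemize}
\item At a double point, $z^d=xy$ has $\gcd$ structure giving an $A_{d-1}$ singularity. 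Its minimal resolution is a chain of $\CC\PP^1$'s.
\item At $\widetilde{l_i}\cap E_p$, $z^d=xy^{m_p}$ with $m_p=\mult(p)$. Since $\gcd(1,m_p,d)=1$, this is a single point whose Hirzebruch--Jung resolution again gives a chain of $\CC\PP^1$'s.
\end{itemize}
Let $S$ be the minimal resolution of these quotient singularities. The resolution is canonical, hence $h$-equivariant, so $h$ and $f$ extend to $S$, and $Z=S\setminus F=f^{-1}(\mathcal{B})$.

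Next I will check the three properties. The preimage $Z$ consists of the strict transforms of the preimages of the $\widetilde{l_i}$ and the $E_p$, together with the Hirzebruch--Jung chains. Normal crossings is standard for toric resolutions of $z^d=x^ay^b$, since the total transform of the axes is toric SNC.
\begin{itemize}
\item Over $\widetilde{l_i}$ the vanishing order is $1$, so $\bar f^{-1}(\widetilde{l_i})$ is a single totally ramified component. It maps isomorphically to $\widetilde{l_i}\cong\CC\PP^1$.
\item Over $E_p^\circ$ the local cover is $z^d=u\cdot x^{m_p}$ with $m_p\mid d$. The preimage of $E_p$ is a union of $\gcd(m_p,d)=m_p$ branches permuted cyclically, unless monodromy around the points $E_p\cap\widetilde{l_i}$ glues them. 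The cover of $E_p^\circ$ is governed by the character restricted to loops in $E_p^\circ$: a loop around $E_p\cap\widetilde{l_i}$ has image $1$ in $\ZZ/d$, modulo the ramification subgroup $\langle m_p\rangle$. These loops therefore generate the full quotient $\ZZ/m_p$, so $f^{-1}(E_p^\circ)$ is connected.
\end{itemize}
This gives a unique component $C_p$, which is an $m_p$-fold cyclic cover of $E_p$ branched at the $m_p$ points. The inertia along $C_p$ is the subgroup of order $d/m_p$ generated by $h^{m_p}$. Hence $h^{m_p}$ fixes $C_p$ pointwise, giving $(h|_{C_p})^{\mult(p)}=\id$.

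The remaining invariance claim is $h(C)=C$ for all components. It holds for the $C_p$ and for the preimages of the $\widetilde{l_i}$ because these are unique over their images. For the exceptional chains I must show that $h$ does not permute different chains or reverse a chain. At a double point, $h$ acts on $z^d=xy$ by $z\mapsto\zeta z$, which fixes the singular point. It acts torically, and torus actions preserve each toric divisor. At $\widetilde{l_i}\cap E_p$ the point is again unique, hence fixed, and the toric argument applies. A further check is that each chain curve is $\CC\PP^1$.

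The main obstacle I expect is the precise local bookkeeping at the points $\widetilde{l_i}\cap E_p$. There I must verify that there is one preimage point, compute it as a $\frac1{n}(1,q)$ singularity, and confirm via toric geometry that the resolution is SNC with rational components invariant under $h$. I would do this with explicit toric computation of the normalization of $z^d=xy^{m}$.
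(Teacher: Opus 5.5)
Your approach is essentially the paper's. The paper's three steps (blow up $\overline{F}$ over the points of multiplicity $\geq 3$, normalize, take the minimal resolution) produce exactly the normalization of $Y$ in $\CC(F)$ followed by the minimal resolution of the cyclic quotient singularities over the nodes of $\mathcal{B}$. Your local models $z^d=xy$ and $z^d=xy^{\mult(p)}$ are also the ones the paper uses.

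There is one step to fix. You assert $m_p\mid d$, which is false in general: five lines, three of them concurrent at $p$, give $m_p=3$ and $d=5$. This case is not vacuous, since the later application uses curves $C_p$ with $m\nmid\mult(p)$. Replace $m_p$ by $\delta_p=\gcd(m_p,d)$ and the argument goes through as follows.
\begin{enumerate}
\item Over a small disc in $E_p^\circ$ there are $\delta_p$ local branches, not $m_p$.
\item The quotient $(\ZZ/d)/\langle m_p\rangle\cong\ZZ/\delta_p$ is still generated by the image $1$ of a meridian of $\widetilde{l_i}$. So your monodromy argument still shows that $f^{-1}(E_p^\circ)$ is connected.
\item $C_p$ is then a $\delta_p$-fold cyclic cover of $E_p$, totally branched at the $m_p$ points $E_p\cap\widetilde{l_i}$. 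This is the paper's curve $s^{\delta}=\prod(v_0-a_iv_1)$ in $\PP(r_1,1,1)$ with $\delta=\gcd(d,r)$, on which $\mu_d$ acts through $\mu_\delta$.
\item The inertia group along $C_p$ is the image of the meridian of $E_p$, namely $\langle h^{m_p}\rangle=\langle h^{\delta_p}\rangle$. Its order is $d/\delta_p$, not $d/m_p$. In particular $h^{m_p}$ still fixes $C_p$ pointwise, so $(h|_{C_p})^{\mult(p)}=\id$ is unaffected.
\end{enumerate}

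With this correction your proof is complete. Your toric justification that $h$ preserves each curve of the Hirzebruch--Jung chains (it acts by a torus element) is somewhat more explicit than the paper's treatment of that point.
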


\begin{proof}
With homogeneous coordinates $x_0,x_1,x_2$ of $\CC\PP^2$ fixed, We have a natural singular compactification of $F$ defined by \[\overline{F}=\{[x_0:x_1:x_2:y]\in \CC\PP^3 \mid y^d=\prod\limits_{i=1}^{d}L_i(x_0,x_1,x_2)\}.\] The covering map $f \colon F \rightarrow U$ naturally extends to a branched covering $ f \colon \overline{F} \rightarrow \CC\PP^2$. The equivariant smooth compactification $S$ is the resolution of $\overline{F}$ with three steps. First we blow up $\overline{F}$ at preimages of those intersection points $p\in \calL_2$ with $\mult(p) \geq 3$. The second step is the normalization of such a surface and denote it by $\widetilde{F}$. The last step is the minimal resolution of $\widetilde{F}$.

The resulting surface $S$ is a smooth compactification of $F$ such that the divisor $Z = S \setminus F $ is simple normal crossing. The irreducible components of $Z$ come from the following $3$ sources:
\begin{enumerate}
    \item the strict transform of the line $\{L_i=0\}$ in $\overline{F}$,
    \item the exceptional divisors at the preimage of a double point, and
    \item the exceptional divisors at the preimage of a multiple point.
\end{enumerate}

The first kind of components are birational to $\CC\PP^1$, which implies that they are isomorphic to $\CC\PP^1$. For the second kind of components, note that the local model of $\overline{F}$ near the preimage of a double point is
$$
z^d=xy.
$$
The exceptional divisor of minimal resolution is a chain of $d$ copies of $\CC\PP^1$. The cyclic group operation on $z$-variale lifts to the minimal resolution and preserves each $\CC\PP^1$.

So it suffices to consider the third kind of components. The local model reduces to 
\[
X_{d,r}\colon z^d=\prod_{i=1}^r(x-a_i y)
\]
where the $a_i$ are distinct. Let $
	\delta=\gcd(d,r), d=\delta d_1, r=\delta r_1.$ After blowup of origin on $xy$-plane and the $(0,0,0)$ on $X_{d,r}$. We obtain a surface locally defined by equation
\begin{equation}
		\label{equation: normalized blowup equation}
z^d=u^r\prod_{i=1}^r(v-a_i).
\end{equation}
Then the normalization $\widetilde{X_{d,r}}$ of this surface is the cyclic cover of $Bl_{(0,0)}\CC^2$ with simple normal crossing branching divisor. It is well-known that $\widetilde{X_{d,r}}$ has quotient singularities. For example, see \cite[Lemma 1.1]{arapura2014hodge}. The branching divisor consists of the strict transforms $\tilde{l}_i$ of $x-a_iy=0$ with multiplicity one and the exceptional divisor $E$ with multiplicity $r$. From the construction of normalizations, the preimage of $E$ is the cyclic cover of $\CC\PP^1$ defined by equation
\begin{equation*}
	C_{d,r}\colon 	\{[s: v_0: v_1]\in \CC\PP(r_1, 1,1)\mid s^\delta=\prod_{i=1}^r(v_0-a_iv_1)\}.
	\end{equation*}
The action of $\mu_d$ on $C_{d,r}$ factors through $\mu_\delta$ and acts on variable $s$.

Now we study the minimal resolution of the remaining singularities on $\widetilde{X_{d,r}}$. They are exactly the intersection points of $C_{d,r}$ with the strict transforms of lines $x-a_i y =0$. As shown in the equation \eqref{equation: normalized blowup equation}, near one of these singularities, the surface $\widetilde{X_{d,r}}$ is the normalization of one defined by
\begin{equation*}
    z^d = u^r v.
\end{equation*}
As shown in the proof of \cite[Lemma 1.1]{arapura2014hodge}, such normalization can be described as 
\[
\CC^2/\mu_{d_1}=\mathrm{Spec}(\CC[U,V]^{\mu_{d_1}})
\]
where the elements $\tau$ of cyclic group $\mu_{d_1}$ operate on $\CC^2$ by $\tau \cdot (U,V)=(\tau U, \tau^{-r_1}V)$. The normalization map is given by $u=U^{d_1}, z=U^{r_1}V, v=V^d$. The exceptional divisor of minimal resolution of quotient singularity $\CC^2/\mu_{d_1}$ is a chain of $\CC\PP^1$. See \cite[Section 2]{hirzebruch1986singularities}. The cyclic group operation on $z$-variale lifts to the minimal resolution and preserves each $\PP^1$. 
\end{proof}

\section{Proof of Theorem \ref{main thm: positive-semi-definite criterion}}\label{section: proof of main theorem}
In this section, we prove our main Theorem \ref{main thm: positive-semi-definite criterion}. Keeping the notation from \S\ref{section: compactification of Milnor fiber}, as in Proposition \ref{thm: smooth compactification of the Milnor fiber of reduced line arrangements}, we fix an equivariant smooth compactification $S$ of $F$, associate each multiple point $p \in \calL_{2}(\calA)$ with an irreducible component $C_{p}$ of $Z = S \setminus F$, and extend $h \colon F \rightarrow F$, $f \colon F \rightarrow U$ to $h \colon S \rightarrow S$, $f \colon S \rightarrow Y$, respectively. For any $ \zeta \in \CC $, denote by $ H^{1}(S,\CC)_{\zeta} $ the $\zeta$-eigenspace of $h^*$ in $H^{1}(S,\CC)$.

The following lemma allows us to compute monodromy action on $S$ instead of $F$.

\begin{lem}\label{lem: extension of the first cohomology}
	For any $ \zeta \neq 1 $, the restriction map
	\begin{equation*}
		H^{1}(S,\CC)_{\zeta} \longrightarrow H^{1}(F,\CC)_{\zeta}
	\end{equation*}
	is surjective.
\end{lem}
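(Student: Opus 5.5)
We will use the Gysin (residue) exact sequence of the pair $(S,Z)$, which is equivariant for $h$. Since $S$ is smooth projective and $Z=\bigcup_j Z_j$ is a simple normal crossing divisor with smooth irreducible components (Proposition \ref{thm: smooth compactification of the Milnor fiber of reduced line arrangements}(1)), Deligne's mixed Hodge theory gives
\[
0\longrightarrow H^1(S,\CC)\longrightarrow H^1(F,\CC)\xrightarrow{\ \mathrm{Res}\ } \bigoplus_j H^0(Z_j,\CC)(-1)\longrightarrow H^2(S,\CC).
\]
Here $W_1H^1(F)$ is the image of $H^1(S)$, and $\mathrm{Gr}^W_2H^1(F)$ is the kernel of the Gysin map $\bigoplus_j H^0(Z_j)\to H^2(S)$. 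Equivalently, this is the long exact sequence of the pair $(S, S\setminus\text{tubular nbhd}(Z))$ combined with Thom isomorphisms. All maps commute with $h^*$, because $h$ extends to $S$ and permutes the components $Z_j$.

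Because $h$ has finite order, $h^*$ is semisimple, so taking $\zeta$-eigenspaces is an exact functor. We therefore get an exact sequence
\[
0\to H^1(S,\CC)_\zeta\to H^1(F,\CC)_\zeta\to \Bigl(\bigoplus_j H^0(Z_j,\CC)\Bigr)_\zeta.
\]
By Proposition \ref{thm: smooth compactification of the Milnor fiber of reduced line arrangements}(3), $h(C)=C$ for every irreducible component $C$ of $Z$. Hence $h^*$ acts on each $H^0(Z_j,\CC)\cong\CC$ by the identity: $h$ maps the connected component $Z_j$ to itself, and the fundamental class and the Tate twist are preserved. So the right-hand term has only the eigenvalue $1$, and its $\zeta$-eigenspace vanishes for $\zeta\neq1$. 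It follows that $H^1(S,\CC)_\zeta\to H^1(F,\CC)_\zeta$ is an isomorphism, and in particular surjective.

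The one point that needs care is the equivariance of the residue map. The residue of a log form along $Z_j$ is intrinsic, so $\mathrm{Res}_{Z_j}(h^*\omega)=h|_{Z_j}^*\,\mathrm{Res}_{h(Z_j)}\omega$. With $h(Z_j)=Z_j$, the induced action on $H^0(Z_j)$ is trivial. Topologically, the residue corresponds to the boundary map to $H^0$ of the circle bundle over $Z_j$ paired with the fiber class. The fiber class is preserved because $h$ is holomorphic, so it preserves the orientation of the normal disc; it also preserves $Z_j$ and is connected to the identity on $H^0$. Everything else is formal.
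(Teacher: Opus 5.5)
Your proposal is correct and takes essentially the same approach as the paper. Both use the $h$-equivariant exact sequence $H^1(S,\CC)\to H^1(F,\CC)\to H^2_Z(S,\CC)\cong\bigoplus_{C}H^0(C,\CC)$, on which $h^*$ acts trivially because every component is $h$-stable, so the $\zeta$-eigenspace of the last term vanishes for $\zeta\neq 1$. Your extra observation that $H^1_Z(S,\CC)=0$, which makes the restriction map an isomorphism, is also correct but not needed for surjectivity.
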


\begin{proof}
Consider the long exact sequence of local cohomology associated with $Z$:
	\begin{equation*}
	 H^{1}(S, \CC) \longrightarrow H^{1}(F, \CC) \longrightarrow H_Z^2(S,\CC) \longrightarrow  H^{2}(S, \CC) \longrightarrow H^{2}(F, \CC),
	\end{equation*}
which is compatible with the action of $h^{*}$. Since $Z$ is normal crossing, denoting by $\Irr(Z)$ the set consisting of all irreducible components of $Z$, we have
        \begin{equation*}
            H_Z^2(S,\CC) \simeq \bigoplus\limits_{C \in \Irr(Z)}  H^{0}(C,\CC),
        \end{equation*}
which is $h^{*}$-invariant by Proposition \ref{thm: smooth compactification of the Milnor fiber of reduced line arrangements}. So the conclusion holds. 
\end{proof}

On $S$, we have the following vanishing lemma.

\begin{lem}\label{lem: positivity leads to vanishing}
	Let $ E $ be an $\RR$-divisor on $ S $ such that $ E^{2} > 0 $. Then for any holomorphic $ 1 $-form $ \alpha \in H^{0}(S,\Omega_{S}^{1}) $, if $ \alpha|_{\Supp E} = 0 $, then $ \alpha = 0 $.
\end{lem}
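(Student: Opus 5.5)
We will use the Hodge index theorem together with the closedness of holomorphic forms on a compact Kähler surface. Let $\alpha$ be a holomorphic $1$-form on the smooth projective surface $S$ whose restriction to every irreducible component of $\Supp E$ vanishes. Here "restriction" means the pullback to the normalization, and by Proposition \ref{thm: smooth compactification of the Milnor fiber of reduced line arrangements} the components are already smooth curves. Write $E=\sum a_j D_j$ with real $a_j$ and distinct irreducible curves $D_j$. The idea is to integrate the closed $(1,1)$-form $\sqrt{-1}\,\alpha\wedge\bar\alpha$ against the classes $[D_j]$. Since $\alpha$ is holomorphic on a compact Kähler manifold, it is $d$-closed. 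Hence $\eta=\sqrt{-1}\,\alpha\wedge\bar\alpha$ is a closed real $(1,1)$-form, and it defines a class $[\eta]\in H^{1,1}(S)\cap H^2(S,\RR)$.

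The first step is to show $[\eta]\cdot [D_j]=0$ for every $j$. The intersection pairing with the fundamental class of $D_j$ is computed by integrating over $D_j$, or over its normalization $\nu_j\colon \widetilde D_j\to S$:
\[
[\eta]\cdot[D_j]=\int_{\widetilde D_j}\nu_j^*\eta=\sqrt{-1}\int_{\widetilde D_j}\nu_j^*\alpha\wedge\overline{\nu_j^*\alpha}=0,
\]
because $\nu_j^*\alpha=0$ by hypothesis. By linearity, $[\eta]\cdot E=0$.

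The second step applies the Hodge index theorem. Since $E^2>0$ and the intersection form on $H^{1,1}_\RR(S)$ has signature $(1,h^{1,1}-1)$, the form is negative definite on $E^\perp$. Therefore $[\eta]^2\le 0$, with equality only if $[\eta]=0$. On the other hand, $\eta\wedge\eta=-\alpha\wedge\bar\alpha\wedge\alpha\wedge\bar\alpha=0$ pointwise, since $\alpha\wedge\alpha=0$. So $[\eta]^2=0$, which forces $[\eta]=0$ in $H^2(S,\RR)$.

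The final step converts $[\eta]=0$ into $\alpha=0$. Take an ample (Kähler) class $\omega$. Then
\[
0=[\eta]\cdot[\omega]=\int_S\sqrt{-1}\,\alpha\wedge\bar\alpha\wedge\omega .
\]
The integrand is a pointwise nonnegative multiple of the volume form, and it vanishes exactly where $\alpha$ vanishes. Hence $\alpha\equiv0$.

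The main obstacle is the use of the Hodge index theorem with an $\RR$-divisor $E$ that need not be effective or ample. We will handle this by working directly in the real vector space $H^{1,1}(S)\cap H^2(S,\RR)$. The Hodge index theorem holds there for real classes: the form is negative definite on the orthogonal complement of any class with positive square. This is exactly what the argument needs.
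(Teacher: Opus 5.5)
Your proposal is correct and follows the paper's own argument essentially step by step. The class of $\sqrt{-1}\,\alpha\wedge\overline{\alpha}$ is orthogonal to $E$ and has square zero, so it vanishes by the Hodge index theorem; pairing it with a K\"ahler class then forces $\alpha=0$. The only cosmetic difference is that you use pointwise semipositivity of $\sqrt{-1}\,\alpha\wedge\overline{\alpha}\wedge\omega$, where the paper cites Hodge--Riemann positivity on $H^{1,0}(S)$, and these amount to the same thing.
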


\begin{proof}
     Choose a K\"ahler form $\omega \in H^2(S,\RR)$ on $S$ and define the intersection pairing on $ H^{k}(S,\CC)(k\leq 2) $ as 
     \begin{equation*}
         \langle-,-\rangle \colon H^{k}(S,\CC) \times H^{k}(S,\CC) \rightarrow \CC,\ \langle [\beta],[\gamma] \rangle = \sqrt{-1}^{2-k}\int_S \beta \wedge \overline{\gamma} \wedge \omega^{2-k}. 
     \end{equation*}
     
     Denote by $[E] \in H_{2}(S,\RR)$ the homology class defined by $E$ and $\PD[E] \in H^{1,1}(S,\RR)$ the Poincar\'e dual of $[E]$. Since $E^2 > 0$, we have $\langle\PD[E],\PD[E]\rangle = E^2 > 0$. By the Hodge-Riemann bilinear relation, we have the signature of $ \langle-,-\rangle $ on $  H^{1,1}(S,\RR) $ is $(1,\dim H^{1,1}(S,\RR)-1)$. So the intersection form on the vector space
    \begin{equation*}
        W = \{\beta \in H^{1,1}(S,\RR) \mid \langle\beta,[E]\rangle = 0 \}
    \end{equation*}
    is negative definite.
    
    Denote by $\eta = \sqrt{-1}\alpha \wedge \overline{\alpha} $. Then $\eta$ defines a class $ [\eta]\in H^{1,1}(S,\RR)$. Since $\alpha|_{\Supp E} = 0 $, we have
    \begin{equation*}
\langle[\eta],\PD[E]\rangle = \int_{E}\eta = 0.
    \end{equation*}
    So $[\eta] \in W $. Since the intersection form is negative definite on $W$ and 
    \begin{equation*}
        \langle[\eta],[\eta]\rangle = \int_{S}\eta \wedge \eta = 0,
    \end{equation*}
    we have $[\eta]=0 $. In particular, regarding $ \alpha \in H^0(S,\Omega_S^1) $ as a cohomology class in $H^{1}(S,\CC) $, we have
    \begin{equation*}
        \langle \alpha,\alpha \rangle = \sqrt{-1}\int_S \alpha \wedge \overline{\alpha} \wedge \omega = \int_S \eta \wedge \omega = \langle[\eta],[\omega]\rangle= 0.
    \end{equation*}

   By the Hodge-Riemann bilinear relation, the restriction of $ \langle-,-\rangle $ to $H^{1,0}(S,\CC) = H^0(S,\Omega_S^1) $ is positive definite. Hence we have $ \alpha = 0 $.
\end{proof}

Now we are ready to prove our main theorem.

\begin{proof}[Proof of Theorem \ref{main thm: positive-semi-definite criterion}]
    Recall that
\[
\Sigma_m(\calA)=\{p \in L_{2}(\calA)\mid m \mid \mult(p),\ \mult(p) \geq 3\}
\]
and there exist $ \lambda_{1},\cdots,\lambda_{d} \in \RR $ such that
	\begin{equation*}
		\sum\limits_{P \in \Sigma_{m}(\calA)}(\sum\limits_{P \in l_{i}}\lambda_{i})^{2}-(\sum\limits_{i=1}^{d}\lambda_{i})^{2}  < 0. 
	\end{equation*}
	
	Consider the $ \RR $-divisor
	\begin{equation*}
		D = \sum\limits_{i=1}^{d}\lambda_{i}\widetilde{l_{i}} + \sum\limits_{p \notin \Sigma_{m}(\calA)}(\sum\limits_{p \in l_{i}}\lambda_{i})E_{p}.
	\end{equation*}
	
	By definition we have
	\begin{equation*}
		D^{2} = ((\sum\limits_{i=1}^{d}\lambda_{i})\pi^{*}H - \sum\limits_{p \in \Sigma_{m}(\calA)}(\sum\limits_{p \in l_{i}}\lambda_{i})E_{p})^{2} = (\sum\limits_{i=1}^{d}\lambda_{i})^{2} - \sum\limits_{p \in \Sigma_{m}(\calA)}(\sum\limits_{p \in l_{i}}\lambda_{i})^{2} > 0. 
	\end{equation*}
	
	Choosing $ E = f^{*}D $, we have $E^{2} = d \cdot D^{2} > 0$. Furthermore, by Proposition \ref{thm: smooth compactification of the Milnor fiber of reduced line arrangements}, for any irreducible component $C$ of $E$, either $C \simeq \CC\PP^1$, or $ h^r|_{C} = \id_C $ for some integer $ r $ not divided by $m$. In both cases we have the $\zeta_m$-eigenspace $ H^0(C,\Omega_C^1)_{\zeta_m} = 0$.

	We then prove that $ H^{0}(S,\Omega_{S}^{1})_{\zeta} = 0 $. For any $ \alpha \in H^{0}(S,\Omega_{S}^{1})_{\zeta_m} $, $ \alpha|_{C} = 0 $  for any irreducible component $C$ of $E$ since $ H^0(C,\Omega_C^1)_{\zeta_m} = 0$. Then by Lemma \ref{lem: positivity leads to vanishing} we have $ \alpha = 0 $. So $ H^{0}(S,\Omega_{S}^{1})_{\zeta_m} = 0 $. 
    
    Similarly $ H^{0}(S,\Omega_{S}^{1})_{\overline{\zeta_m}} = 0 $. So
	\begin{equation*}
		H^{1}(S,\CC)_{\zeta_m} = H^{0}(S,\Omega_{S}^{1})_{\zeta_m} \oplus H^{1}(S,O_{S})_{\zeta} = H^{0}(S,\Omega_{S}^{1})_{\zeta_m} \oplus \overline{H^{0}(S,\Omega_{S}^{1})_{\overline{\zeta_m}}} = 0.
	\end{equation*}
	Hence by Lemma \ref{lem: extension of the first cohomology} we have
    \begin{equation*}
        H^{1}(F,\CC)_{\zeta_m} = \operatorname{Im}(H^{1}(S,\CC)_{\zeta_m} \rightarrow H^{1}(F,\CC))  = 0
    \end{equation*}
\end{proof}

\section{Applications}

In this section, we demonstrate several applications of Theorem \ref{main thm: positive-semi-definite criterion}, including partial progress toward the Papadima--Suciu conjecture and the verification of Conjectures \ref{conj: Salvetti--Serventi connectivity} and \ref{conj: Yoshinaga's sharp pair}.

Keeping the notations from \S\ref{section: introduction}, denote by
\begin{equation*}
    t_{r} = \#\{p \in \calL_{2} \mid \mult(p) = r\}
\end{equation*}
the number of points of multiplicity $r$ in $\calL_2$. Then we have the following theorem. 
\begin{thm}\label{thm: square sum criterion}
	Let $\calA = \{l_1,\cdots,l_d\}$ be a line arrangement in $\CC\PP^2$. Let $ m > 1 $ be an integer dividing $d$. If 
	\begin{equation}\label{eqn: square sum}
		d^{2} > \sum\limits_{m \mid r}r^{2}t_{r},
	\end{equation}
	then $ H^{1}(F,\CC)_{\zeta_m} = 0 $ for any primitive $ m $-th root of unity $\zeta_m$.
\end{thm}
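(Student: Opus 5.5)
This is the specialization $\lambda_1=\cdots=\lambda_d=1$ of Theorem \ref{main thm: positive-semi-definite criterion}, so the plan is to make that substitution and rewrite the resulting quadratic expression in terms of the multiplicity counts $t_r$.

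With all $\lambda_i=1$, the inner sum $\sum_{p\in l_i}\lambda_i$ attached to a point $p\in\calL_2$ counts the lines through $p$, so it equals $\mult(p)$. The total $\sum_{i=1}^d\lambda_i$ equals $d$. The quantity in Theorem \ref{main thm: positive-semi-definite criterion} therefore becomes
\[
\sum_{p\in\Sigma_m(\calA)}\mult(p)^2-d^2 .
\]
Grouping the points of $\Sigma_m(\calA)$ by multiplicity gives
\[
\sum_{p\in\Sigma_m(\calA)}\mult(p)^2=\sum_{r\ge 3,\ m\mid r} r^2 t_r .
\]

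The one point needing care is that $\Sigma_m(\calA)$ excludes multiplicity $2$, while the sum in \eqref{eqn: square sum} runs over all $r$ with $m\mid r$. When $m=2$ this includes the term $4t_2$. Since every term is nonnegative,
\[
\sum_{r\ge3,\ m\mid r} r^2 t_r\le \sum_{m\mid r} r^2 t_r < d^2 ,
\]
where the last inequality is the hypothesis \eqref{eqn: square sum}. Hence the expression in Theorem \ref{main thm: positive-semi-definite criterion} is negative.

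Since $m>1$ divides $d$, Theorem \ref{main thm: positive-semi-definite criterion} applies and gives $H^1(F,\CC)_{\zeta_m}=0$ for every primitive $m$-th root of unity $\zeta_m$. I expect no real obstacle; the only step that needs attention is the bookkeeping of double points just described.
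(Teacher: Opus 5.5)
Your proposal is correct and is exactly the paper's argument: set all $\lambda_i=1$ in Theorem \ref{main thm: positive-semi-definite criterion}, so the inner sums become $\mult(p)$ and the total becomes $d$. Your handling of the $4t_2$ term when $m=2$ is more careful than the paper, which calls the two inequalities ``exactly'' the same. Since $\Sigma_m(\calA)$ excludes double points, that term is absent from the specialized expression, and because it is nonnegative the hypothesis \eqref{eqn: square sum} still gives the required strict inequality.
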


\begin{proof}
    In Theorem \ref{main thm: positive-semi-definite criterion}, choose $\lambda_i=1$. Then the inequality is exactly the one above.
\end{proof}

Combining with Hirzebruch's inequality for line arrangements, we have the following theorem.

\begin{thm}[=Theorem \ref{main thm: upper bound on permissible eigenvalues}]
\label{thm: upper bound on permissible eigenvalues}
	Suppose that $ \calA $ is essential. Then $ H^{1}(F,\CC)_{\zeta_m} = 0 $ for any primitive root $ m $-th of unity $\zeta_m$ when $ m \geq 6 $.
\end{thm}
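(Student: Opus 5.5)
The plan is to reduce the statement to Theorem \ref{thm: square sum criterion}. Since $h^d=\id_F$, only $m\mid d$ matters. Writing $t_r$ as before, it suffices to show that for every $m\geq 6$ dividing $d$,
\[
Q:=d^2-\sum_{m\mid r} r^2 t_r>0 .
\]
Only multiplicities $r\geq m\geq 6$ appear in the sum.

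First I rewrite $Q$ with the standard incidence count $d(d-1)=\sum_{r\geq 2} r(r-1)t_r$, which counts ordered pairs of distinct lines. This gives
\[
Q=d+\sum_{m\nmid r} r(r-1)t_r-\sum_{m\mid r} r\,t_r .
\]
The first two terms are nonnegative, so the task is to dominate $\sum_{m\mid r} r\,t_r$, using mainly the term $2t_2$ that sits inside the middle sum.

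Next I invoke a Hirzebruch-type inequality coming from Bogomolov--Miyaoka--Yau. I would use it in the strengthened logarithmic form (Bojanowski/Pokora type), valid when $t_r=0$ for all $r>\tfrac{2}{3}d$:
\[
t_2+\tfrac34 t_3\;\geq\; d+\sum_{r\geq 5}\Bigl(\tfrac{r^2}{4}-r\Bigr)t_r .
\]
Substituting twice this lower bound for $t_2$ into the expression for $Q$ gives
\[
Q\geq 3d+\Bigl(6-\tfrac32\Bigr)t_3+12t_4+\Bigl(20+\tfrac{5}{2}\Bigr)t_5+\sum_{r\geq 6}\Bigl(\tfrac{r^2}{2}-2r+\epsilon_r\Bigr)t_r .
\]
Here $\epsilon_r=r^2-r$ if $m\nmid r$ and $\epsilon_r=-r$ if $m\mid r$. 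In both cases the coefficient is at least $\tfrac{r^2}{2}-3r\geq 0$ for $r\geq 6$. Hence $Q\geq 3d>0$, and Theorem \ref{thm: square sum criterion} applies.

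The main obstacle is the remaining case, where some point $p$ has multiplicity $r_0>\tfrac23 d$. Essentiality rules out $r_0=d$. If $r_0=d-1$ (near pencil), every other point is a double point, and $m\mid d$, $m\geq 6$ forces $m\nmid d-1$, so the sum defining $Q$ is empty. In general, two points of multiplicity $>\tfrac23 d$ cannot coexist, since they share at most one line, which would require $r_1+r_2-1\leq d$. Every other point $q$ has multiplicity at most $d-r_0+1<\tfrac d3+1$, because at most one line through $q$ passes through $p$. Summing over the remaining points via the pair count, the contributions $\sum_{q\neq p} r_q^2 t_{r_q}$ should be bounded by roughly $2\cdot\tfrac{d}{3}\cdot\tfrac{d(d-1)-r_0(r_0-1)}{d}$, which is small. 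One then checks $r_0^2+(\text{rest})<d^2$ directly. I would carry out this elementary estimate carefully, or alternatively apply Theorem \ref{main thm: positive-semi-definite criterion} with $\lambda_i$ smaller on the lines through $p$ to absorb $p$.
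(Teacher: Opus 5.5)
Your main case is correct. It differs from the paper only in the inequality used. The rewriting $Q=d+\sum_{m\nmid r}r(r-1)t_r-\sum_{m\mid r}r\,t_r$ is the paper's pair-count manipulation, and with the Bojanowski--Pokora inequality your coefficients $\frac{r^2}{2}-3r\ge 0$ for $r\ge 6$ check out.

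\textbf{The gap is the residual case.} Your inequality requires $t_r=0$ for all $r>\frac23 d$. So the residual case is every arrangement with a point $p$ of multiplicity $r_0$, $\frac23 d<r_0\le d-2$, not just near-pencils. There you only assert that the remaining contribution ``should be bounded by roughly'' $\frac23\bigl(d(d-1)-r_0(r_0-1)\bigr)$. That bound happens to be true, but it does not follow from the route you indicate.

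Put $s=d-r_0<\frac d3$. The pair count over points $q\ne p$ gives $\sum_{q\ne p}r_q(r_q-1)=s(2d-s-1)$.
\begin{itemize}
\item For $r_q\ge 6$ the best you can use is $r_q^2\le\frac65 r_q(r_q-1)$. This yields $\frac65 s(2d-s-1)$, which exceeds $d^2-r_0^2=s(2d-s)$ whenever $s<2d-6$, i.e.\ always in your range.
\item Using instead $r_q\le s+1$ and $\sum_{q\ne p,\,r_q\ge 6}r_q\le\frac15 s(2d-s-1)$ gives $\frac15(s+1)s(2d-s-1)$. This is again larger than $s(2d-s)$ as soon as $s\ge 5$.
\end{itemize}
The underlying reason is that the full pair count is dominated by the $r_0 s$ mixed pairs (a line through $p$ and a line not through $p$). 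Most of these meet in double points, so they say nothing about the high-multiplicity points.

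\textbf{The missing idea is to count only pairs among the $s$ lines not through $p$.} A point $q\ne p$ lies on at most one line through $p$. So if $r_q\ge 6$, it lies on $n_q\ge r_q-1\ge 5$ of the other $s$ lines. Distinct points use disjoint pairs, so $\sum_q\binom{n_q}{2}\le\binom s2$. Since $(n+1)^2\le\frac95 n(n-1)$ for $n\ge 5$, you get $\sum_{q\ne p,\ m\mid r_q}r_q^2\le\frac95 s(s-1)$. Hence
$Q\ge s(2d-s)-\frac95 s(s-1)=s\bigl(2d-\frac{14}{5}s+\frac95\bigr)>0$,
which closes the case.

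\textbf{Alternatively, follow the paper.} Hirzebruch's strengthened inequality $t_2+\frac34t_3\ge d+\sum_{r\ge5}(2r-9)t_r$ needs only $t_d=t_{d-1}=t_{d-2}=0$ and $d\ge 6$. In your computation it gives the coefficient $2(2r-9)-r=3r-18\ge 0$ for $r\ge 6$, which is all that is needed. The leftover cases $t_{d-1}>0$ or $t_{d-2}>0$ are then trivial, because every other point has multiplicity at most $3$. Your quadratic inequality buys coefficients stronger than necessary, at the price of a much larger exceptional case. That exceptional case is exactly where your argument is incomplete.
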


\begin{proof}
by Theorem \ref{thm: square sum criterion}, it suffices to verify that the inequality \eqref{eqn: square sum} holds.

 Without loss of generality we may assume $ m \mid d $. So we have $d \geq 6$. Since $\calA$ is essential, we have $t_{d} = 0$. When either $t_{d-1}>0$ or $t_{d-2}>0$, we have
\begin{equation*}
    \sum\limits_{m \mid r}r^{2}t_{r} \leq \sum\limits_{r \geq 6}r^{2}t_{r} \leq (d-1)^2 < d^2.
\end{equation*}

Otherwise, $t_d = t_{d-1} = t_{d-2} = 0 $. Since $d\geq6$, the strengthened Hirzebruch inequality
\cite[inequality~(9)]{hirzebruch1986singularities} applies:
\begin{equation}
 \label{equation: strengthened Hirzebruch}
 t_2+\frac34t_3
 \geq d+\sum_{r\geq5}(2r-9)t_r.
\end{equation}
Together with the pair counting formula
\[
 \binom d2=t_2+3t_3+6t_4+
 \sum_{r\geq 5}\binom r2t_r,
\]
we have
\begin{equation*}
    \frac12 d^2 > \binom d2
 \geq d+\frac94t_3+6t_4+ 11t_5+
 \sum_{r\geq 6}
\frac{r^2+3r-18}{2} \cdot t_r \geq 
\frac12\sum_{r\geq6}r^2t_r.
\end{equation*}
So in both cases the inequality \eqref{eqn: square sum} holds, which implies that the conclusion holds. 
\end{proof}

It is worth noting that the Hirzebruch inequality \cite{hirzebruch1986singularities} used here is obtained by applying Bogomolov--Miyaoka--Yau inequality \cite{miyaoka1977chern} \cite{yau1977calabi} to branched abelian covering of blowups of $\CC\PP^2$ with respect to line arrangement. The results above can also be viewed as vanishing theorems about cohomology on cyclic coverings of blowups of $\CC\PP^2$. It is interesting to see a more direct relation through the geometry of those abelian coverings, or more explicitly, whether the curvature approach of Yau \cite{yau1977calabi} can give a direct proof of this vanishing theorem.

As an application of Theorem \ref{thm: upper bound on permissible eigenvalues}, we give a complete solution of Conjecture \ref{conj: Salvetti--Serventi connectivity}. We first consider the case where $ m $ is a prime power. Under this assumption, Bailet \cite{bailet2014monodromy} has proved the case where $ \Gamma(\calA) $ is connected. The following lemma is a slight generalization of Bailet's result, obtained by the same method, using the Aomoto complex of $\calA$.

\begin{lem}\label{lem: connected graph kills aomoto}
    Let $\fp$ be a prime and $s > 1 $ be an integer. Let $ \Gamma_{\fp}(\calA) $ be the graph defined by vertices
$l_i \in \calA $ and edges $ (l_i,l_j) $ if and only if
$\mult(l_i\cap l_j) \leq 2 $ or $\fp \nmid \mult(l_i\cap l_j) $.  If $\Gamma_{\fp}(\calA)$ is connected after removing some vertex, then $ H^1(F,\CC)_{\zeta_{\fp^s}}= 0$ for any primitive $\fp^s$-th root of unity $\zeta_{\fp^s}$.
\end{lem}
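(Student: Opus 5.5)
We would follow the Papadima--Suciu / Bailet approach through the mod-$\fp$ Aomoto complex. The key input is the modular upper bound of Papadima--Suciu \cite{papadima2017milnor}. For $m=\fp^s$ a prime power,
\[
\dim H^1(F,\CC)_{\zeta_{\fp^s}} \le \beta_{\fp}(\calA),
\]
where $\beta_{\fp}(\calA)=\dim_{\FF_{\fp}} H^1(A^{\bullet}\otimes\FF_{\fp},\,\cdot\,\sigma)$ is the Aomoto--Betti number. Here $A=H^*(U,\ZZ)$ is the Orlik--Solomon algebra, with degree-one generators $e_1,\dots,e_d$ subject to $\sum e_i=0$ in the projective setting, and $\sigma=\sum_i e_i$. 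The whole task therefore reduces to a purely combinatorial claim: $\beta_{\fp}(\calA)=0$ whenever $\Gamma_{\fp}(\calA)$ minus some vertex $l_{i_0}$ is connected.

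Concretely, we work with the affine cone, so that $A^1\otimes\FF_\fp$ is spanned by $e_1,\dots,e_d$ with $\sigma=\sum e_i$. A class in $H^1$ is represented by $\tau=\sum c_i e_i$ with $\sigma\tau=0$ in $A^2\otimes\FF_\fp$, taken modulo $\FF_\fp\sigma$. We use the standard decomposition $A^2=\bigoplus_{p\in\calL_2}A^2_p$, where $A^2_p$ is spanned by the $e_ie_j$ with $p\in l_i\cap l_j$, modulo Orlik--Solomon relations. The condition $\sigma\tau=0$ splits into local conditions, one for each intersection point $p$.

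At a double point $p=l_i\cap l_j$ we have $A^2_p=\FF_\fp e_ie_j$, and the local condition reads $c_j-c_i=0$, so $c_i=c_j$. At a point $p$ of multiplicity $r\ge3$, the local Orlik--Solomon algebra is that of $r$ central lines in $\CC^2$. The standard computation, which is the local form of the resonance computation, shows that $\sigma_p\tau_p=0$ forces either all $c_i$ with $p\in l_i$ to be equal, or $\sum_{p\in l_i}c_i=0$ together with the condition that $\sigma_p=\sum_{p\in l_i}e_i$ is resonant. Resonance of $\sigma_p$ over $\FF_\fp$ happens exactly when its coefficient sum $r$ vanishes in $\FF_\fp$, i.e. $\fp\mid r$. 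Thus whenever $\fp\nmid r$, all lines through $p$ share the same coefficient. In other words, every edge of $\Gamma_{\fp}(\calA)$ forces the coefficients of its two endpoints to be equal.

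Since $\Gamma_{\fp}(\calA)\setminus\{l_{i_0}\}$ is connected, all $c_i$ with $i\neq i_0$ are equal to a common value $c$. Subtracting $c\sigma$, we may assume $\tau=c'e_{i_0}$. We then need to check that $\sigma\cdot e_{i_0}\ne0$ in $A^2\otimes\FF_\fp$ unless $c'=0$. This holds because $l_{i_0}$ meets some other line at a point $p$. If $p$ is a double point, the local component $e_je_{i_0}$ of $\sigma e_{i_0}$ is nonzero. If $p$ has higher multiplicity, the component $\sigma_pe_{i_0}$ is nonzero in $A^2_p$, which has basis $e_ae_b$ for a fixed $a$ and $b\ne a$. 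Essentiality, or the hypothesis $d\ge2$, guarantees that such a $p$ exists. Hence $\tau\in\FF_\fp\sigma$, so $\beta_\fp(\calA)=0$, and the Papadima--Suciu bound gives the vanishing.

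The main obstacle is the local computation at a multiple point. Over $\FF_\fp$, the local resonance must be shown to occur only when $\fp\mid\mult(p)$, and one must verify that otherwise the local cocycle condition forces equal coefficients. We would first do this by writing $A^2_p$ in the basis $e_1e_j$ ($j\ge2$) and solving $\sigma_p\tau_p=0$ explicitly. This yields the relations $c_j-c_1=\tfrac{1}{r}\bigl(\sum_k c_k-rc_1\bigr)$-type equations, which are solvable with all $c_i$ equal precisely when $r$ is invertible mod $\fp$. The remaining care is to quote the Papadima--Suciu inequality in the right normalization (cone versus projective complement).
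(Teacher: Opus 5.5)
Your proposal is correct and follows essentially the same route as the paper. Both use the Papadima--Suciu bound $\dim H^1(F,\CC)_{\zeta_{\fp^s}}\le\beta_\fp(\calA)$ to reduce to showing that the degree-one cocycles of the mod-$\fp$ Aomoto complex of the cone are just $\FF_\fp\sigma$, and both use that every edge of $\Gamma_\fp(\calA)$ forces equal coefficients. The differences are cosmetic: the paper quotes Falk's explicit description of the cocycle space (the condition $\sum_{p\in l_i}\tau_i=0$ when $\fp\mid\mult(p)\ge3$, equality of coefficients otherwise) instead of rederiving it from the local Orlik--Solomon computation, and it finishes by evaluating that condition at one point of $l_d$ rather than by checking $\sigma e_{i_0}\neq 0$.
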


\begin{proof}
    Without loss of generality, we may assume $\fp^s \mid d$. As shown in \cite[Theorem 11.3]{papadima2010spectral} , the following inequality holds:
    \begin{equation*}
        \dim H^1(F,\CC)_{\zeta_{\fp^s}} \leq \beta_{\fp}(\calA),
    \end{equation*}
    where $ \beta_{\fp}(\calA) $ is the Aomoto--Betti number of $\calA$ over the finite field $\FF_{\fp}$. More explicitly, as shown in \cite[Theorem 3.5]{falk2004line} (see also \cite[Lemma 3.1]{papadima2017milnor}), we have
    \begin{equation*}
        \beta_{\fp}(\calA) = \dim_{\FF_\fp}Z_\fp(\calA) - 1,
    \end{equation*}
    where $ Z_\fp(\calA) \subset \FF_{\fp}^{d} $ is the subspace consisting of those vectors $ (\tau_1,\cdots,\tau_d) $ satisfying that for any $p \in \calL_2 $,
    \begin{equation*}
        \begin{cases}
            \sum\limits_{p \in l_i}\tau_i = 0 & ,\ \text{if }\fp \mid \mult(p)\text{ and }\mult(p) \geq 3, \\
            \tau_i = \tau_j \text{ for any }i,j\text{ such that }p = l_i \cap l_j & ,\ \text{otherwise}.
        \end{cases}
    \end{equation*}

When $\Gamma_{\fp}(\calA)$ is connected after removing some vertex, without loss of generality we may assume this vertex is $l_d$. Then for any $ (\tau_1,\cdots,\tau_d) \in Z_\fp(\calA) $, we have $ \tau_1=\cdots=\tau_{d-1} $. Denote by $ \tau = \tau_1$. Furthermore, choose a point $ p \in \calL_2 \cap l_d $. Then we have
\begin{equation*}
    \begin{cases}
            (\mult(p)-1)\tau+\tau_d = 0 & ,\ \text{if }\fp \mid \mult(p)\text{ and }\mult(p) \geq 3, \\
            \tau_d = \tau  & ,\ \text{otherwise}.
        \end{cases}
\end{equation*}
   In both cases we have $ \tau_d = \tau $. So $\dim_{\FF_\fp} Z_{\fp}(\calA) = 1 $ and $ \beta_\fp(\calA)=0$. So $ H^1(F,\CC)_{\zeta_{\fp^s}}= 0$. 
\end{proof}

The following result is then a direct corollary of  Lemma \ref{lem: connected graph kills aomoto} and Theorem \ref{thm: upper bound on permissible eigenvalues}.

\begin{cor}[=Corollary \ref{corollary: connected}]
\label{corollary: connected 2}
    Let $\calA$ be an arrangement of $d$ distinct lines in $\CC\PP^2$. Let  $\Gamma(\calA)$ be the graph defined by vertices
$l \in \calA $ and edges $ (l,l') $ if and only if
$l\cap l' $ is a double point. If $\Gamma(\calA)$ is connected after removing some vertex, then $\dim H^1(F,\CC)= d-1$.
\end{cor}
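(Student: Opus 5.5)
We need $\dim H^1(F,\CC)=d-1$. Since $h^d=\id_F$, the space $H^1(F,\CC)$ splits into $\zeta$-eigenspaces with $\zeta^d=1$. The invariant part $H^1(F,\CC)_1$ is $H^1(U,\CC)$, because $f\colon F\to U$ is the quotient by the cyclic group generated by $h$, and $\dim H^1(U,\CC)=d-1$ by Orlik--Solomon. So the statement reduces to showing $H^1(F,\CC)_{\zeta_m}=0$ for every $m>1$ with $m\mid d$ and every primitive $m$-th root of unity $\zeta_m$.

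\textbf{Case $m\geq 6$.} Here Theorem~\ref{thm: upper bound on permissible eigenvalues} gives the vanishing directly. To apply it we need $\calA$ to be essential. If $\calA$ is not essential, then all lines pass through a single point, so $\Gamma(\calA)$ has no edges when $d\geq 3$. For $d\geq 4$, removing one vertex leaves a disconnected graph, so the hypothesis excludes this. In the degenerate cases $d\le 3$ with all lines concurrent, $F$ has the homotopy type of the Milnor fiber of a one-variable cone (a pencil), so $\dim H^1(F,\CC)=d-1$ holds directly; I would check this separately.

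\textbf{Case $m\in\{2,3,4,5\}$.} Each such $m$ is a prime power $\fp^s$ with $\fp\in\{2,3,5\}$ and $s\ge 1$. Lemma~\ref{lem: connected graph kills aomoto} is stated only for $s>1$. However, its proof uses \cite[Theorem 11.3]{papadima2010spectral}, which bounds $\dim H^1(F,\CC)_{\zeta_{\fp^s}}$ by the Aomoto--Betti number $\beta_\fp(\calA)$ for every $s\geq 1$. So the same argument covers $s=1$, and I would note this explicitly. It then remains to check that the hypothesis of the lemma holds. Every edge of $\Gamma(\calA)$ is a pair $(l,l')$ meeting at a double point, so it satisfies $\mult(l\cap l')\le 2$. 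Hence $\Gamma(\calA)$ is a subgraph of $\Gamma_\fp(\calA)$ on the same vertex set. If $\Gamma(\calA)$ minus some vertex is connected, then $\Gamma_\fp(\calA)$ minus that vertex is connected too. The lemma then gives $H^1(F,\CC)_{\zeta_m}=0$.

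Combining the two cases, the only surviving eigenvalue is $1$, and therefore $\dim H^1(F,\CC)=\dim H^1(U,\CC)=d-1$.

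The main obstacle is the bookkeeping at the edges of the statement: justifying that the Papadima--Suciu bound applies for $s=1$, and disposing of the non-essential degenerate arrangements, where Theorem~\ref{thm: upper bound on permissible eigenvalues} is not available. The rest is a direct combination of earlier results.
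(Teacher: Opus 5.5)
Your proposal is correct and follows the paper's proof. You reduce to the non-trivial eigenspaces, dispose of $m\geq 6$ with Theorem~\ref{thm: upper bound on permissible eigenvalues} and of $2\leq m\leq 5$ with Lemma~\ref{lem: connected graph kills aomoto}. Your remarks that the lemma's argument works verbatim for $s=1$ (the Papadima--Suciu bound holds for all $s\geq 1$) and that $\Gamma(\calA)\subseteq\Gamma_{\fp}(\calA)$ just make explicit what the paper leaves implicit.

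One small correction to your case bookkeeping: three concurrent lines also violate the hypothesis, since deleting a vertex leaves two isolated vertices. For them $\dim H^1(F,\CC)=(d-1)^2=4\neq 2$, so your ``direct check'' would fail there; restrict it to $d\leq 2$, where the claim does hold. For $m\geq 6$ this is moot anyway: only $m\mid d$ matters, so $d\geq 6$ and essentiality follows immediately from the hypothesis.
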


\begin{proof}
    It suffices to prove that for any $ m \geq 2 $ and any primitive $ m $-th root of unity $\zeta_m$,
	\[
	    H^1(F,\CC)_{\zeta_m}=0.
	\]
    Note that $ \calA $ is essential since $\Gamma(\calA)$ is connected. So the claim holds when $ m \geq 6 $ by Theorem \ref{thm: upper bound on permissible eigenvalues}. When $ 2 \leq m \leq 5 $, we have $ m $ is a prime power. Then the claim holds by Lemma \ref{lem: connected graph kills aomoto}.
\end{proof}

When $\calA$ is a complexified real line arrangement, Theorem \ref{thm: upper bound on permissible eigenvalues} can be strengthened by replacing inequality \eqref{equation: strengthened Hirzebruch} with an estimation proved by Shnurnikov \cite{shnurnikov2016tk}.

\begin{thm}[=Theorem \ref{main thm: upper bound on permissible eigenvalues when A is real}]
\label{thm: upper bound on permissible eigenvalues when A is real}
	Suppose that $ \calA $ is an essential complexified real line arrangement in $\CC\PP^2$. Then $ H^{1}(F,\CC)_{\zeta} = 0 $ for any primitive $ m $-th root of unity when $ m \geq 5 $.
\end{thm}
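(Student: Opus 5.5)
Since Theorem~\ref{thm: upper bound on permissible eigenvalues} already handles $m \geq 6$, only $m = 5$ remains. By Theorem~\ref{thm: square sum criterion} it suffices to check
\[
d^2 > \sum_{5 \mid r} r^2 t_r .
\]
We may assume $5 \mid d$, so $d \geq 5$. Since $\calA$ is essential, $t_d = 0$. The argument splits into a near-pencil case and a generic case, as in the proof of Theorem~\ref{thm: upper bound on permissible eigenvalues}.

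\textbf{Near-pencil case.} Suppose there is a point of multiplicity $d-1$, or more generally a point $p$ with $\mult(p) \geq d-4$.
\begin{itemize}
\item A point of multiplicity $d-1$ forces every other point to be a double point. Hence
\[
\sum_{5 \mid r} r^2 t_r \leq (d-1)^2 < d^2 .
\]
\item If $\mult(p) = r_0$ is large, then every other point $q$ meets at most one line through $p$. So $\mult(q) \leq d - r_0 + 1$. Once $r_0 > (d+1)/2$, at most one point can have multiplicity divisible by $5$ and at least $5$ (any two such points would share a line). A direct check then gives $r_0^2 < d^2$, or $q$'s contribution is small.
\item More robustly, every line through a point of multiplicity $r$ meets $d-1$ other lines. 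This gives $\sum_{p \in l} (\mult(p) - 1) = d-1$, and so bounds how many multiplicity-$\geq 5$ points a line carries.
\end{itemize}

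\textbf{Generic case.} Here I would invoke Shnurnikov's inequality for real arrangements \cite{shnurnikov2016tk}. I recall it as a Hirzebruch-type bound of the form
\[
t_2 + \tfrac{3}{2} t_3 \geq 8 + \sum_{r \geq 4} (2r - 7.5)\, t_r ,
\]
valid when $t_d = t_{d-1} = 0$; it may have slightly different constants. The plan is to combine it with the pair-counting formula
\[
\binom{d}{2} = t_2 + 3t_3 + 6t_4 + \sum_{r \geq 5} \binom{r}{2} t_r ,
\]
eliminating $t_2$ to obtain
\[
\binom{d}{2} \geq 8 + \tfrac{3}{2} t_3 + \sum_{r \geq 4} \Bigl( \binom{r}{2} + 2r - 7.5 \Bigr) t_r .
\]
For $r \geq 5$ the coefficient is
\[
\frac{r^2 + 3r - 15}{2} \geq \frac{r^2}{2},
\]
since $3r \geq 15$. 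This yields
\[
\frac{d^2}{2} > \binom{d}{2} \geq \frac{1}{2} \sum_{r \geq 5} r^2 t_r \geq \frac{1}{2} \sum_{5 \mid r} r^2 t_r ,
\]
which is the required inequality \eqref{eqn: square sum}. Theorem~\ref{thm: square sum criterion} then finishes the proof.

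\textbf{Main obstacle.} The delicate step is matching the exact form and hypotheses of Shnurnikov's inequality. Whatever the constants, the key point is that the coefficient of $t_5$ after substitution must be at least $25/2$, so the $r=5$ term is where the margin is tight. Equally important is correctly handling the excluded near-pencil configurations. If the inequality's coefficient of $t_r$ is weaker than $2r-7.5$, I would instead use the weaker bound $\sum_{5 \mid r} r^2 t_r$, which involves only $r = 5, 10, \dots$. I would also exploit the spare $\tfrac{3}{2} t_3$, $t_4$ and constant terms to absorb the deficit at $r = 5$.
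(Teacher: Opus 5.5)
This is essentially the paper's proof. The paper also disposes of $t_{d-1}>0$ or $t_{d-2}>0$ directly via $\sum_{r\geq 5}r^2t_r\leq (d-1)^2$, and otherwise combines Shnurnikov's inequality with pair counting exactly as you do, using $\frac{r^2+3r-15}{2}\geq \frac{r^2}{2}$ for $r\geq 5$; it treats all $m\geq 5$ at once rather than reducing to $m=5$.

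One correction: Shnurnikov's inequality needs $t_{d-2}=0$ as well, not just $t_d=t_{d-1}=0$. It fails, for instance, for $d-2$ concurrent lines plus two lines meeting on one of them. Your threshold $\mult(p)\geq d-4$ still covers this, since every other point then has multiplicity $\leq d-\mult(p)+1\leq 5$, which makes the near-pencil bound a short check. Your parenthetical claim about $r_0>(d+1)/2$ is false in general, but you do not need it.
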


\begin{proof}
by Theorem \ref{thm: square sum criterion}, it suffices to verify that the inequality \eqref{eqn: square sum} holds.

 Since $\calA$ is essential, we have $t_{d} = 0$.When either $t_{d-1}>0$ or $t_{d-2}>0$, we have
\begin{equation*}
    \sum\limits_{m \mid r}r^{2}t_{r} \leq \sum\limits_{r \geq 5}r^{2}t_{r} \leq (d-1)^2 < d^2.
\end{equation*}

Otherwise, $t_d = t_{d-1} = t_{d-2} = 0 $. By Shnurnikov's inequality \cite[Theorem 1(b)]{shnurnikov2016tk} we have:
\begin{equation*}
 t_2+\frac32t_3
 \geq 8+\sum_{r\geq4}(2r-\frac{15}{2})t_r.
\end{equation*}
Combining with the pair counting formula
\[
 \binom d2=t_2+3t_3+
 \sum_{r\geq 4}\binom r2t_r,
\]
we have
\begin{equation*}
    \frac12 d^2 > \binom d2
 \geq 8+\frac32t_3+ \frac{13}{2}t_4 +
 \sum_{r\geq 5}
 \frac{r^2+3r-15}{2} \cdot t_r
 \geq \frac12\sum_{r\geq5}r^2t_r.
\end{equation*}
So in both cases the inequality \eqref{eqn: square sum} holds, which implies that the conclusion holds. 
\end{proof}

The following result is a direct corollary of Theorem \ref{thm: upper bound on permissible eigenvalues when A is real} and \cite[Theorem 1.6]{xie2025homology}.

\begin{cor}[=Corollary \ref{corollary: Yoshinaga's conjecture intro}]
\label{Corollary: Yoshinaga's conjecture}
   Let $ \calA $ be an essential real complexified line arrangement in $ \CC\PP^{2} $. If $ \calA $ contains a sharp pair, then $ H^1(F,\CC)_{\zeta_m} = 0$ for any primitive $m$-th root of unity $\zeta_m$ when $m \neq 1,3$.
\end{cor}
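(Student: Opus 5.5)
The plan is to split according to the order $m$ of the eigenvalue. Fix an essential complexified real arrangement $\calA$ with a sharp pair and a primitive $m$-th root of unity $\zeta_m$, where $m\neq 1,3$. We may assume $m \mid d$; otherwise $\zeta_m$ cannot be an eigenvalue of $h^*$, since $h^d=\id_F$, and there is nothing to prove.

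\textbf{Case $m\geq 5$.} Here we invoke Theorem \ref{thm: upper bound on permissible eigenvalues when A is real} directly. It applies because $\calA$ is an essential complexified real arrangement, and it gives $H^1(F,\CC)_{\zeta_m}=0$. The sharp pair hypothesis is not needed in this range.

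\textbf{Case $m\in\{2,4\}$.} The remaining values are even, and this is exactly the case covered by the earlier work \cite[Theorem 1.6]{xie2025homology}. That result proves Conjecture \ref{conj: Yoshinaga's sharp pair} for even $m$, using the sharp pair through the homology graph of the real figure. I would quote it to conclude $H^1(F,\CC)_{\zeta_m}=0$ for $m=2,4$, and indeed for every even $m$. There is some overlap with the first case, which is harmless.

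The cases together cover every $m\geq 2$ with $m\neq 3$, which is the statement. The only point needing care is that the hypotheses of \cite[Theorem 1.6]{xie2025homology} match ours: essential, real, containing a sharp pair, $m$ even. That is the main thing to check. Should that theorem only treat $m=2$ or prime-power $m$, the fallback for $m=4$ is Lemma \ref{lem: connected graph kills aomoto} with $\fp=2$. One would then argue that the sharp pair forces the graph $\Gamma_2(\calA)$ to be connected after removing one of the two lines of the pair, which I expect to be the harder, genuinely combinatorial step.
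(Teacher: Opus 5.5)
Your proposal is correct and is the paper's own proof: $m\geq 5$ is handled by Theorem \ref{thm: upper bound on permissible eigenvalues when A is real} (the sharp pair is not needed there), and even $m$ by \cite[Theorem 1.6]{xie2025homology}, which covers all even $m$ and so leaves only $m=1,3$. Since that theorem covers every even $m$, your fallback through Lemma \ref{lem: connected graph kills aomoto} is not needed, and its claim that a sharp pair makes $\Gamma_2(\calA)$ connected after deleting a vertex would have been unproven anyway.
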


\begin{proof}
    The claim holds when $ m \geq 5 $ by Theorem \ref{thm: upper bound on permissible eigenvalues when A is real}, and when $ 2 \mid m $ according to \cite[Theorem 1.6]{xie2025homology}. So the claim holds for all positive integers $m$ except $1,3$.
\end{proof}
     
	 \bibliographystyle{alpha}
	\bibliography{reference}
\end{document}